\def\ps@pprintTitle{%
   \let\@oddhead\@empty
   \let\@evenhead\@empty
   \let\@oddfoot\@empty
   \let\@evenfoot\@oddfoot
}
\def\ps@pprintTitle{%
   \let\@oddhead\@empty
   \let\@evenhead\@empty
   \let\@oddfoot\@empty
   \let\@evenfoot\@oddfoot
}
\newtheorem{theorem}{Theorem}[section]
\newtheorem{lemma}[theorem]{Lemma}
\newtheorem{corollary}[theorem]{Corollary}
\newtheorem{definition}[theorem]{Definition}
\newtheorem{remark}[theorem]{Remark}
\newtheorem{proposition}[theorem]{Proposition}
\newcommand{\dl}{\delta_{\text{left}}}
\newcommand{\dR}{{_{RL}}\delta^{\alpha}}
\newcommand{\dC}{{_C}\delta^{\alpha}}
\newcommand{\R}{\mathbb{{R}}}
\newcommand{\N}{\mathbb{{N}}}
\newcommand{\C}{\mathbb{{C}}}
\numberwithin{equation}{section}
\DeclareRobustCommand{\eqrefp}[2]{%
\textup{\tagform@{\refp{#1}{#2}}}}
\DeclareRobustCommand{\refp}[2]{%
\expandafter\ifx\csname r@#1\endcsname\relax
  \textbf{??}%
  \else
    \edef\areferencia{\ref{#1}-)}%
    \expandafter\eqrefpaux\areferencia-#2%
  \fi}
\def\eqrefpaux#1-#2){#1}
\begin{document}
\begin{frontmatter}
\title{Subordination principle, Wright functions and large-time behaviour for the discrete in time fractional diffusion equation}

\author{Luciano Abadias$^{\dagger}$}
\address{Departamento de Matem\'aticas, Instituto Universitario de Matem\'aticas y Aplicaciones, Universidad de Zaragoza, 50009 Zaragoza, Spain. \\ $^{\dagger}$labadias@unizar.es}
\author{Edgardo Alvarez$^{\ddagger}$ and Stiven D\'iaz$^{*}$ }
\address{Universidad del Norte, Departamento de Matem\'aticas y Estad\'istica, Barranquilla, Colombia. \\
$^{\ddagger}$ealvareze@uninorte.edu.co  \\$^{*}$stivend@uninorte.edu.co}

\begin{keyword}
Subordination formula, Scaled Wright function, Fractional difference equations, Large-time behavior, Decay of solutions, Discrete fundamental solution.

\MSC[2010] Primary: 39A14, 35R11, 33E12, 35B40.
\end{keyword}

\begin{abstract}
The main goal in this paper is to study asymptotic behaviour  in $L^p(\R^N)$ for the solutions of the fractional version of the discrete in time $N$-dimensional diffusion equation, which involves the Caputo fractional $h$-difference operator. The techniques to prove the results are based in new subordination formulas involving the discrete in time Gaussian kernel, and which are defined via an analogue in discrete time setting of the scaled Wright functions. Moreover, we get an equivalent representation of that subordination formula by Fox H-functions.
\end{abstract}
\end{frontmatter}


\section{Introduction}

One of the most important aspects in the study of evolution problems is the asymptotic behaviour of solutions. In particular, since J. Fourier introduced the classical heat equation $u_t=\Delta u$ in 1822, see \cite{Fourier}, to model diffusion phenomena, several authors have invested their time and effort in researching the large time behaviour of diffusion processes. For example in \cite{DD,ZE,GV,KS,N} the authors studied large-time behaviour and other asymptotic estimates for diffusion problems in $\R^N,$ and in \cite{D2, GV2} for open bounded domains. Estimates for heat kernels on manifolds have been studied in \cite{Gr,L,D}, and in \cite{M} the author obtained Gaussian upper estimates for the heat kernel associated to the sub-laplacian on a Lie group.

In last years, everyone knows the relevance of fractional calculus on the topic of evolution problems and partial differential equations. Recently, from different points of view see \cite{Abadias-Alvarez-18, Kemppainen}, asymptotic estimates of solutions of fractional diffusion phenomena have been stated. In particular, in \cite{Abadias-Alvarez-18} the authors use subordination formulas as the main tool, however in \cite{Kemppainen} the authors work directly with estimates of the fractional fundamental solution. We propose to use both tools for our aims, as we will see throughout the paper.

On the other hand, finite differences were introduced some centuries ago, and they have been used in different mathematical problems, mainly in approximation of solutions of differential problems for the numerical solution of differential equations and partial differential equations. The most knowing ones are the forward, backward and central differences (the forward and backward differences are associated to the Euler, explicit and implicit, numerical methods). In the last years, several authors have been working in partial difference-differential equations (\cite{Abadia-Alvarez-20,ADT, Abadias-Lizama, C, C2, PAMS, LR}) from the point of mathematical analysis, more precisely, harmonic analysis, functional analysis and fractional differences. These last ones, fractional differences, are nowadays a topic of important research, see for example \cite{Abadia-Alvarez-20,Go-Peter,Lizama-(2015),PAMS,Mo-Wy,Ponce-(2020)} and references therein. Several aspects of such problems have been studied in that papers: maximal regularity, stability, fractional discrete resolvent operators, among others.

The main goal in this paper is to study asymptotic decay and large time behaviour on $L^p(\R^N)$ of solutions of the following fractional discrete in time heat problem, as the authors in \cite{Abadia-Alvarez-20} do in the classical case ($\alpha=1$). We consider \begin{equation}\label{1.2}
\left\{
\begin{array}{lll}
\dC u(nh,x) -\Delta u(nh,x)=0,\quad n\in\N,\,x\in\mathbb{R}^N, \\ \\
u(0,x) =f(x),\\
\end{array}
\right.
\end{equation}
where $0 <\alpha \leq 1$, $\dC$ is the Caputo fractional $h$-difference operator (Section \ref{Section 3}), $\Delta$ denotes the Laplace operator acting in space, $u$ is defined on $\N^{h}_0\times\R^{N}$ and $f$ is a function defined on $\R^{N}$. We use subordination formulas to write the solution of the previous problem. Indeed we will write the solution $$u(nh,x)=\sum_{j=1}^{\infty}\varphi^h_{\alpha,1-\alpha}(n-1,j-1)(\mathcal{G}_{j,h}*f)(x),$$ where $\mathcal{G}_{j,h}$ is the discrete Gaussian kernel associated to the discrete in time heat problem given in \cite{Abadia-Alvarez-20}, and $\varphi^h_{\alpha,\beta}$ is discrete scaled Wright function (which is introduced in Section \ref{dsftsf}):
\begin{equation*}
\varphi^h_{\alpha,\beta}(n,j):=
              \frac{1}{2 \pi i} \displaystyle\int_{\Upsilon} \frac{1}{z^{n+1}}\frac{\left(1-h(\frac{1-z}{h})^{\alpha} \right)^{j}}{(\frac{1-z}{h})^{\beta}}\,dz,  \quad  n,j\in\mathbb{N}_0,\,\beta\geq0.
\end{equation*}
Note that it is a generalization of one given in \cite{Alvarez-Diaz-Lizama-2020}.

Previous subordination formula and the known results about the classical case ($\alpha=1,$ see \cite{Abadia-Alvarez-20}) are the key tools to study the decay and the asymptotic behaviour on $L^p(\R^n)$ for the solutions of \eqref{1.2}.

The paper is organized as follows. In second section we revisited known useful results for our aims. We recall the concept of Wright functions, which plays a key role in the subordination formulae for resolvent families in the continuous case, and whose properties help us to study important results for the discrete setting. Also, we present basic properties for the discrete Gaussian kernel in $\R^N$ (solution of \eqref{1.2} for $\alpha=1$).

Section 3 is devoted to state our fractional discrete setting. We consider the classical backward difference on the mesh of step $h>0.$ This difference allows to consider adequate notions of discrete fractional sum, and discrete fractional difference in the Riemann-Liouville and Caputo sense, for our purpose. Several useful properties will be shown.

In Section 4 we introduced the Mittag-Leffler sequences which are the solutions of \eqref{1.2} in the scalar case. Such Mittag-Leffler functions motivate the study of the Wright functions in the discrete setting, which are one of the key tools in this paper. Proposition \ref{Prop-Levy} contains many properties of both type functions, and the relations between them. Such result can be considered the analogue one to \cite[Theorem 3]{Abadia-Miana} in the continuous case.

In Section 5 we focus on the study of the fundamental solution of $\eqref{1.2}.$ We prove by the subordination formula given by the discrete heat kernel and Wright functions that, in fact, the integral defined by such subordination formula is the solution. One of the flashy facts of the subordination formula proposed (see \eqref{subordination1}) is its relation with the Gaussian kernel and the Fox H-function, as Proposition \ref{GaussianSemigroup-FS} and Proposition  \ref{FoxH-FS} show. Also we state basic properties of the fundamental solution. One of them is that the integral over $\R^N$ of the fundamental solution is 1, which gives directly the mass conservation principle for solutions of \eqref{1.2}.

Section 6 contains the asymptotic $L^p$-results. We state the $L^p$-decay for the fundamental solution and its gradient. These decay bounds allow to get the $L^p$-decay for the solution of $\eqref{1.2}$ (Theorem \ref{teorema4.1}), and also the large time behaviour. In this asymptotic behaviour is reflected the mass conservation principle because the solution converges to the total mass times the fundamental solution (see Theorem \ref{Theorem5.1}).

\section{Preliminaries}
\subsection{Continuous fractional calculus.}  In this part, we recall some concepts and basic results about fractional calculus in continuous time.
Let $0<\alpha<1$ and $f$ be a locally integrable function. The Riemann-Liouville fractional derivative of $f$ of order $\alpha$ is given by
$$_{R}D^{\alpha}_t f(t) := \frac{d}{dt}\displaystyle\int_0^t \dfrac{(t-s)^{-\alpha}}{\Gamma(1-\alpha)}f(s) \, ds, \ t \geq 0.$$
The Caputo fractional
derivative of  order $\alpha$ of a function $f$ is defined by
\begin{align*}
_{C}D^{\alpha}_t f(t)&:=\dfrac{1}{\Gamma(1-\alpha)} \displaystyle\int_0^t  (t-s)^{-\alpha }f'(s) \, ds, \ t \geq 0,
\end{align*}
where $f'$ is the first order distributional derivative of $f(\cdot)$, for example if we assume that $f(\cdot)$ has locally integrable distributional derivative up to order one.  Then, when $\alpha=1$ we obtain $_{C}D^{\alpha}_t:=\dfrac{d}{dt}$.  For more details, see for example  \cite{Mainardi, Miller}.

The Mittag-Leffler functions are given by
\begin{align*}
E_{\alpha,\beta}(z):=\displaystyle\sum_{n=0}^{\infty}\frac{z^n}{\Gamma(\alpha n+\beta)}, \qquad \alpha,\,\beta>0,\,z\in\C.
\end{align*}
We write $E_{\alpha}(z):=E_{\alpha,1}(z).$ They are solutions of the fractional differential problems
$$_C D_t^{\alpha}E_{\alpha}(\omega t^{\alpha})=\omega E_{\alpha}(\omega t^{\alpha}),$$
and
$$_R D_t^{\alpha}\biggl(t^{\alpha-1}E_{\alpha,\alpha}(\omega t^{\alpha})\biggr)=\omega t^{\alpha-1}E_{\alpha,\alpha}(\omega t^{\alpha}),$$ for $0<\alpha<1,$ under certain initial conditions. 
Their Laplace transform is
\begin{equation*}
\int_0^{\infty}e^{-\lambda t}t^{\beta-1}E_{\alpha,\beta}(\omega t^{\alpha})\,dt=\frac{\lambda^{\alpha-\beta}}{\lambda^{\alpha}-\omega}, \qquad \mbox{Re}(\lambda)>\omega^{\frac{1}{\alpha}},\,\omega>0.
\end{equation*}
For more details about the Mittag-Leffler function $E_{\alpha,\beta}$ see \cite[Chapter 18]{EMOTB}.

Recall the definition of the Wright type function (see \cite{Mainardi})
\begin{equation*}\label{wright}
W_{\lambda,\mu}(z)=\displaystyle\sum_{n=0}^{\infty}\dfrac{z^n}{n!\Gamma(\lambda n+\mu)}= \frac{1}{2\pi i}
\int_{H_a} \sigma^{-\mu} e^{\sigma +z \sigma^{-\lambda}} d\sigma, \quad
 \lambda>-1,\,\mu\geq 0,\,z\in \mathbb{C},
\end{equation*}
where $H_a$ denotes the Hankel path defined as a contour which starts and ends
at $-\infty$ and encircles the origin once counterclockwise.

For $0<\alpha<1$ and $\beta\geq 0,$ the scaled Wright function in two variables  $\psi_{\alpha,\beta}$ (and which was introduced by Abadias and Miana in \cite{Abadia-Miana})  is given by
\begin{align}\label{ScaledWright}
\psi_{\alpha,\beta}(t,s):=t^{\beta-1}W_{-\alpha,\beta}(-st^{-\alpha}),\qquad t>0,\ s\in\C.
\end{align}

Note that using the change of variable $z=\frac{\sigma}{t},$ we get the integral representation $$\psi_{\alpha,\beta}(t,s)=\frac{1}{2\pi i}\int_{Ha}z^{-\beta}e^{tz-sz^{\alpha}}\,dz, \qquad t,s>0.$$

Many properties about such functions that we will use along the paper appear in \cite{Abadia-Miana}.\\ \\
Next, we recall the definition of Fox H-funtions. Let $m, n, p, q\in\N_0$ such that $0\leq m \leq q$, $0\leq n \leq p$. Let  $a_i,b_j\in \C$  and $\alpha_i,\beta_j\in \R_+.$ The Fox H-function is defined via a Mellin-Barnes type integral,
\begin{align*}
H^{mn}_{pq}(z)&:=H^{mn}_{p,q}\left[\begin{array}{c}
z \left\vert \begin{array}{c}
 (a_{i}, \alpha_{i})_{1,p} \\
 (b_{j}, \beta_{j})_{1,q}
\end{array}
\right.
\end{array}
\right] =\frac{1}{2\pi i} \int_{\gamma}\mathcal{H}^{mn}_{pq}(s) z^{-s} \ ds,
\end{align*}
where
\begin{align*}
(a_{i}, \alpha_{i})_{1,p}:=(a_{1}, \alpha_{1}), &\cdots, (a_{p}, \alpha_{p}),  \\
  (b_{j}, \beta_{j})_{1,q}:=(b_{1}, \beta_{1}), &\cdots, (b_{q}, \beta_{q}),
\end{align*}

\begin{align*}
\mathcal{H}^{mn}_{pq}(s)=\dfrac{\displaystyle\prod_{j=1}^{m}\Gamma(b_j+\beta_j s)\displaystyle\prod_{i=1}^{n}\Gamma(1-a_i-\alpha_i s)}{\displaystyle\prod_{i=n+1}^{p}\Gamma(a_i+\alpha_i s)\displaystyle\prod_{j=m+1}^{q}\Gamma(1-b_j-\beta_j s)},
\end{align*}
and $\gamma$ is the infinite contour in the complex plane which separates the poles
\begin{align*}
b_{jl}=\frac{-b_j-l}{\beta_j} \ (j=1,\cdots, m; \ l\in\N_{0})
\end{align*}
of the Gamma function $\Gamma(b_j+\beta_{j}s)$ to the left of $\gamma$ and the poles
\begin{align*}
a_{ik}=\frac{1-a_i+k}{\alpha_i} \ (i=1,\cdots, n; \ k\in\N_{0})
\end{align*}
to the right of $\gamma$.  For more details of this type of functions, see \cite{H-transforms}. \\ \\
Finally, we recall that the Gaussian kernel $G_t(x)$ is defined by
\begin{align}\label{gaussian-semigroup}
G_t(x)=\frac{1}{(4\pi t)^{1/2}}e^{-\frac{|x|^2}{4t}}, \qquad t>0, \ x\in\R^{N}.
\end{align}

Let
$$\widehat{u}(\xi)=\mathcal{F}(u)(\xi)=(2\pi)^{-N/2}\int_{\mathbb{R}^N}e^{-ix\cdot\xi}f(x)dx$$
and
$$\mathcal{F}^{-1}(u)(\xi):=\mathcal{F}(u)(-\xi)$$
denote the Fourier and inverse Fourier transform of $u$, respectively.

The function $G_t(x)$ has the following properties (see for example
\cite{Evans}).\\


\begin{proposition}\label{gauss-prop}
The Gaussian kernel satisfies:
\begin{enumerate}[$(i)$]
\item $\displaystyle G_t(x)>0$.
\item $\displaystyle \int_{\R^N}G_t(x)\,dx=1$.
\item $\displaystyle\mathcal{F}(G_t)(\xi)=e ^{-t\vert\xi \vert^2},$ \quad $\xi\in\R^N.$
\item $\displaystyle \int_{\R^N}|x|^2 G_{t}(x)\,dx=2Nt.$
\end{enumerate}
\end{proposition}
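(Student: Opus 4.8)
The plan is to verify the four items in turn, each of which reduces, after a linear change of variables and Fubini's theorem, to a one–dimensional Gaussian computation. Item $(i)$ is immediate, since $G_t(x)$ is a strictly positive constant times $e^{-|x|^2/(4t)}$ and the exponential of a real number is positive. For $(ii)$ I would substitute $x=\sqrt{4t}\,y$, turning $\int_{\R^N}G_t(x)\,dx$ into a constant multiple of $\int_{\R^N}e^{-|y|^2}\,dy$; by Fubini the latter equals $\big(\int_{\R}e^{-s^2}\,ds\big)^{N}$, and the classical evaluation $\int_{\R}e^{-s^2}\,ds=\sqrt{\pi}$ (square the integral and pass to polar coordinates) together with the Jacobian cancels the normalising factor and leaves the value $1$.

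For $(iii)$ the cleanest route is to differentiate under the integral sign. Writing $g(\xi):=\mathcal{F}(G_t)(\xi)$, one differentiates in $\xi_k$, uses $x_k\,e^{-|x|^2/(4t)}=-2t\,\partial_{x_k}e^{-|x|^2/(4t)}$, and integrates by parts in $x_k$ (the boundary terms vanish by the Gaussian decay); this shows that $g$ solves the linear first–order system $\partial_{\xi_k}g(\xi)=-2t\,\xi_k\,g(\xi)$ for $k=1,\dots,N$. Together with the value of $g(0)$ furnished by $(ii)$, solving this system yields the stated identity $\mathcal{F}(G_t)(\xi)=e^{-t|\xi|^2}$. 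Alternatively, one may complete the square, $-\tfrac{|x|^2}{4t}-ix\cdot\xi=-\tfrac{1}{4t}|x+2it\xi|^2-t|\xi|^2$, and shift the contour of integration back to the real axis coordinate by coordinate via Cauchy's theorem, the decay of the Gaussian making the vertical segments negligible.

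Finally, $(iv)$ follows by differentiating a parameter–dependent Gaussian integral: from $\int_{\R^N}e^{-a|x|^2}\,dx=(\pi/a)^{N/2}$ one differentiates both sides in $a$ to get $\int_{\R^N}|x|^2e^{-a|x|^2}\,dx=\tfrac{N}{2}\,\pi^{N/2}a^{-(N+2)/2}$; specialising $a=1/(4t)$ and multiplying by the normalising constant gives $2Nt$. Differentiation under the integral is legitimate since the integrand and its $a$–derivative are locally dominated by integrable functions. The proposition is essentially routine; the only points needing a little care are the interchanges of differentiation (or the contour deformation) with the integral in $(iii)$, and in each case the rapid decay of the Gaussian justifies them.
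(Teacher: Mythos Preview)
Your argument is correct and complete: each of the four items is handled by the standard one–dimensional Gaussian computations you describe, and the justifications for differentiating under the integral sign (or shifting the contour) are exactly as you indicate.

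There is nothing to compare against, however: the paper does not prove this proposition at all. It is stated as background with a reference to Evans' textbook, so your write-up supplies a proof where the paper simply cites one. One small caveat worth noting as you finalise (iii): with the Fourier convention the paper writes down, carrying the prefactor $(2\pi)^{-N/2}$, item $(ii)$ gives $g(0)=(2\pi)^{-N/2}$ rather than $1$, which is inconsistent with the stated formula $\mathcal{F}(G_t)(\xi)=e^{-t|\xi|^2}$. This is a normalisation slip in the paper (as is the exponent $1/2$ in place of $N/2$ in the definition of $G_t$), not a flaw in your reasoning; just be aware that your ODE argument will produce whichever constant the chosen convention dictates.
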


\subsection{Discrete diffusion equation.}
In \cite{Abadia-Alvarez-20}, for $h>0$ the authors defined the heat kernel in discrete time on the mesh of step $h$ as
\begin{equation}\label{discretegaussian}
\mathcal{G}_{n,h}(x):= \frac{1}{h^n\Gamma(n)} \int_0^\infty e^{-t/h} t^{n-1} G_t(x)\,dt,\quad  n\in\N,x\in\R\setminus\{0\}.
\end{equation}
Moreover, they proved the following proposition.
\begin{proposition}\label{DG-Properties}
The function $\mathcal{G}_{n,h}$ satisfies:
	\begin{enumerate}[$(i)$]
	\item  $\displaystyle\mathcal{G}_{n,h}(x)>0,\quad n\in\N,x \in \R.$
	\item  $\displaystyle \int_{\R}^{}\mathcal{G}_{n,h}(x)\ dx=1$.
	\item  $\displaystyle\mathcal{F}({\mathcal{G}}_{n,h})(\xi)=\frac{1}{(1+h|\xi|^2)^n},\quad \xi\in\R.$
\item  $\dfrac{\mathcal{G}_{n,h}(x)-\mathcal{G}_{n-1,h}(x)}{h} = \Delta \mathcal{G}_{n,h}(x),\quad n\geq 2,\ x\in\R.$
	\end{enumerate}
\end{proposition}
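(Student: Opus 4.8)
The plan is to deduce all four statements directly from the integral representation \eqref{discretegaussian}, using the corresponding properties of the continuous Gaussian kernel collected in Proposition \ref{gauss-prop} together with Fubini--Tonelli. Item $(i)$ is immediate: for $t>0$ the integrand $e^{-t/h}t^{n-1}G_t(x)$ is strictly positive by Proposition \ref{gauss-prop}$(i)$, and the constant $h^n\Gamma(n)$ is positive, so $\mathcal{G}_{n,h}(x)>0$.

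For $(ii)$ and $(iii)$ I would interchange the $x$-integration (respectively the Fourier transform in $x$) with the $t$-integration. This interchange is licensed by Tonelli's theorem, since $e^{-t/h}t^{n-1}G_t(x)\ge 0$ and, using Proposition \ref{gauss-prop}$(ii)$ and the substitution $s=t/h$,
$$\int_0^\infty e^{-t/h}t^{n-1}\Big(\int_{\R}G_t(x)\,dx\Big)\,dt=\int_0^\infty e^{-t/h}t^{n-1}\,dt=h^n\Gamma(n)<\infty;$$
this computation already yields $(ii)$. For $(iii)$, after the interchange one inserts Proposition \ref{gauss-prop}$(iii)$ and evaluates $\int_0^\infty e^{-t/h}t^{n-1}e^{-t|\xi|^2}\,dt=\Gamma(n)\,(1/h+|\xi|^2)^{-n}$, so that dividing by $h^n\Gamma(n)$ gives $\mathcal{F}(\mathcal{G}_{n,h})(\xi)=(1+h|\xi|^2)^{-n}$.

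For $(iv)$ there are two natural routes. The shortest is to take Fourier transforms of both sides: by $(iii)$,
$$\mathcal{F}\!\left(\frac{\mathcal{G}_{n,h}-\mathcal{G}_{n-1,h}}{h}\right)(\xi)=\frac1h\left(\frac{1}{(1+h|\xi|^2)^n}-\frac{1}{(1+h|\xi|^2)^{n-1}}\right),$$
and, writing $a=1+h|\xi|^2$ so that $1-a=-h|\xi|^2$, the right-hand side collapses to $-|\xi|^2(1+h|\xi|^2)^{-n}=-|\xi|^2\mathcal{F}(\mathcal{G}_{n,h})(\xi)=\mathcal{F}(\Delta\mathcal{G}_{n,h})(\xi)$; injectivity of the Fourier transform then gives the claimed identity. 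Alternatively one can argue in physical space: differentiate under the integral sign and use the heat equation $\Delta G_t=\partial_t G_t$, integrate by parts in $t$ (for $n\ge 2$ the boundary terms vanish because $t^{n-1}G_t(x)\to 0$ as $t\to 0^+$ and as $t\to\infty$), and use $\Gamma(n)=(n-1)\Gamma(n-1)$ to recognize the two resulting integrals as $\frac1h\mathcal{G}_{n,h}(x)$ and $-\frac1h\mathcal{G}_{n-1,h}(x)$.

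I do not expect a genuine obstacle: the proposition is essentially a bookkeeping exercise on top of Proposition \ref{gauss-prop}. The only points requiring a word of care are the justification of the interchanges of integration (and of Fourier transform and integral) by Tonelli/Fubini, and---if one follows the physical-space route for $(iv)$---the permissibility of differentiating under the integral sign and the vanishing of the boundary terms in the integration by parts; both are handled by routine dominated-convergence estimates using the rapid decay of $e^{-t/h}t^{n-1}$ and the decay of $G_t(x)$.
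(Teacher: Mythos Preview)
Your proposal is correct: each step follows from the integral representation \eqref{discretegaussian} together with Proposition \ref{gauss-prop}, and the Fubini/Tonelli and Fourier-transform manipulations you describe are legitimate for the reasons you give. Note, however, that the paper does not supply its own proof of this proposition; it is quoted from \cite{Abadia-Alvarez-20}, so there is no in-paper argument to compare against. Your approach is the natural one and is presumably what the cited reference does.
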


Next, given a function $f$ defined on $\R^N$ and $h>0$, Abadias and Alvarez proved (see \cite{Abadia-Alvarez-20}) that  $w(nh,x):=(\mathcal{G}_{n,h}\ast f)(x)$ is the unique solution of the problem
\begin{equation}\label{SolFirtOrderEquation}
\left\{
\begin{array}{lll}
\dfrac{w(nh,x)-w((n-1)h,x)}{h}= \Delta w(nh,x) ,\quad n\in\N,\,x\in\mathbb{R}^N\setminus\{0\}, \\ \\
w(0,x) =f(x).
\end{array}
\right.
\end{equation}

\section{Discrete fractional calculus}\label{Section 3}
In this section we recall the definition of Ces\`aro numbers and some useful properties of them. Also, we introduce the discrete time setting where we will work, and the corresponding associated fractional calculus.

For an arbitrary $\alpha\in\C,$ we denote by $k^{\alpha}(n)$ the Ces\`aro numbers which are the Fourier coefficients of the holomorphic function on the disc $(1-z)^{-\alpha},$ that is,
\begin{align}\label{ZT-K}
\frac{1}{(1-z)^{\alpha}}=\sum_{n=0}^{\infty}k^{\alpha}(n)z^n.
\end{align}
It is known that the expression of the  Ces\`aro numbers is given by
\begin{equation}\label{kernel_k_1}
 k^{\alpha}(n):=\left\{ \begin{array}{lcc}
             \dfrac{\alpha(\alpha+1)\cdots \cdots ( \alpha+n-1)}{n!}, &   \qquad   n\in\N, \\ \\
 \qquad  \qquad \qquad 1, & \qquad    n= 0.
             \end{array}
   \right.
\end{equation}
Note that $k^0(n):= \delta_{0}(n)$ is the Kronecker delta. Sometimes, to make more easily computations with the Ces\`aro sequence $k^{\alpha}$ for $\alpha\in\C \setminus \{0,-1,-2,...\}$, we use an equivalent expression of \eqref{kernel_k_1}, namely
\begin{equation}\label{kernel_k}
k^{\alpha}(n)= \dfrac{\Gamma(n+\alpha)}{\Gamma(\alpha)\Gamma(n+1)},
 \end{equation}
where $\Gamma(\cdot)$ is the gamma function.

We recall the following properties of  $k^{\alpha}$ which appear for example  in  \cite{GoLi19,PAMS,Zygmund}.
\begin{proposition}\label{prop-Cesaro} The following properties hold:
\begin{enumerate}[$(i)$]
\item For $\alpha>0$, $k^{\alpha}(n)>0$, $n\in\N_0$.
\item For all $\alpha,\beta\in\C$, we have the semigroup property
\begin{align}\label{semigroup-k}
 \sum_{j=0}^{n} k^{\alpha}(n-j) k^{\beta} (j)=k^{\alpha+\beta}(n).
\end{align}
\item For $\alpha>0,$
\begin{align}\label{asin-kernel}
k^{\alpha}(n)=\frac{n^{\alpha-1}}{\Gamma(\alpha)}\left( 1 + \mathcal{O}\left(\frac{1}{n}\right)\right), \quad n\in\N.
\end{align}
\item  For $\alpha>0$
\begin{align}\label{inde-k}
k^{\alpha}(n+1)=\dfrac{\alpha+n}{n+1}k^{\alpha}(n).
\end{align}
\end{enumerate}
\end{proposition}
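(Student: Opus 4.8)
The statement to prove is Proposition \ref{prop-Cesaro}, which collects four standard properties of the Cesàro numbers $k^{\alpha}(n)$ defined through the generating function $(1-z)^{-\alpha}=\sum_n k^{\alpha}(n)z^n$. The plan is to derive each item either from the explicit product formula \eqref{kernel_k_1}, from the Gamma-function expression \eqref{kernel_k}, or from the generating function identity \eqref{ZT-K}, since these three descriptions are all at our disposal.

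For $(i)$, positivity when $\alpha>0$ is immediate from \eqref{kernel_k_1}: each of the factors $\alpha,\alpha+1,\dots,\alpha+n-1$ is strictly positive, and so is $n!$, hence $k^\alpha(n)>0$; the case $n=0$ gives $1>0$. For $(ii)$, the semigroup property is precisely the Cauchy product of power series: multiplying $(1-z)^{-\alpha}=\sum_n k^\alpha(n)z^n$ by $(1-z)^{-\beta}=\sum_n k^\beta(n)z^n$ gives $(1-z)^{-(\alpha+\beta)}=\sum_n\bigl(\sum_{j=0}^n k^\alpha(n-j)k^\beta(j)\bigr)z^n$, and comparing coefficients with $(1-z)^{-(\alpha+\beta)}=\sum_n k^{\alpha+\beta}(n)z^n$ yields \eqref{semigroup-k}. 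One should note that for general complex $\alpha,\beta$ one uses \eqref{kernel_k_1} (valid for all $\alpha$) rather than \eqref{kernel_k}; convergence of the Cauchy product holds for $|z|<1$, which suffices for identifying coefficients. For $(iv)$, the recurrence $k^\alpha(n+1)=\frac{\alpha+n}{n+1}k^\alpha(n)$ follows by dividing consecutive terms in \eqref{kernel_k_1}: the numerator gains a factor $\alpha+n$ and the denominator gains a factor $n+1$. Alternatively it follows from \eqref{kernel_k} using $\Gamma(n+1+\alpha)=(n+\alpha)\Gamma(n+\alpha)$ and $\Gamma(n+2)=(n+1)\Gamma(n+1)$.

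The only item requiring a genuine estimate is $(iii)$, the asymptotic $k^\alpha(n)=\frac{n^{\alpha-1}}{\Gamma(\alpha)}\bigl(1+\mathcal O(1/n)\bigr)$. Here I would start from \eqref{kernel_k}, write $k^\alpha(n)=\frac{1}{\Gamma(\alpha)}\cdot\frac{\Gamma(n+\alpha)}{\Gamma(n+1)}$, and apply the standard ratio asymptotics for the Gamma function: $\frac{\Gamma(n+\alpha)}{\Gamma(n+1)}=n^{\alpha-1}\bigl(1+\mathcal O(1/n)\bigr)$ as $n\to\infty$. This last fact is a consequence of Stirling's formula (or of the known expansion $\Gamma(n+a)/\Gamma(n+b)=n^{a-b}(1+\tfrac{(a-b)(a+b-1)}{2n}+\mathcal O(n^{-2}))$); since this is entirely classical I would simply cite \cite{Zygmund} or \cite{EMOTB} for it rather than reprove it.

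The main (and really the only) obstacle is making the error term in $(iii)$ genuinely uniform, i.e. controlling the implied constant in $\mathcal O(1/n)$; but as this reduces to the classical Gamma-ratio expansion, the proof is short and the obstacle is mild. All four parts are essentially bookkeeping with the generating function and the two closed forms \eqref{kernel_k_1}–\eqref{kernel_k}, so I would present the proof compactly, doing $(i)$, $(ii)$, $(iv)$ in a line or two each and spending the bulk of the (brief) argument citing and applying the Gamma asymptotics for $(iii)$.
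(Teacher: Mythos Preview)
Your argument is correct in every part, but note that the paper does not actually prove this proposition: it simply states the four properties and cites \cite{GoLi19,PAMS,Zygmund} for them. So your proposal supplies a self-contained proof where the paper gives only a reference; the arguments you sketch (product formula for $(i)$ and $(iv)$, Cauchy product of generating functions for $(ii)$, Gamma-ratio asymptotics via Stirling for $(iii)$) are exactly the standard ones found in those references, and there is nothing to correct.
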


Let $h>0$ and $f$ be a sequence defined on $\N_0^h:=\{0,h,2h,\ldots\},$ the backward difference of the sequence $f$ is defined by
\begin{center}
$\dl f(nh):=\dfrac{f(nh)-f((n-1)h)}{h},$  \quad $n\in \N.$
\end{center}

Taking into account the previous  definition, we get  the following  result.

\begin{proposition}\label{delta-k}
Let $0<\alpha<1$ and $\rho_{\alpha}(nh):=h^{\alpha}k^\alpha(n)$. Then,
$$\delta_{\text{left}} \rho_{\alpha}(nh)=h^{\alpha-1}k^{\alpha-1}(n), \ n\in\N.$$
\end{proposition}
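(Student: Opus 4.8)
The plan is to unwind the definition of the backward difference and reduce the statement to a single elementary identity for Ces\`aro numbers. By definition,
$$\delta_{\text{left}}\rho_\alpha(nh)=\frac{\rho_\alpha(nh)-\rho_\alpha((n-1)h)}{h}=\frac{h^\alpha k^\alpha(n)-h^\alpha k^\alpha(n-1)}{h}=h^{\alpha-1}\bigl(k^\alpha(n)-k^\alpha(n-1)\bigr),$$
so it suffices to prove that $k^\alpha(n)-k^\alpha(n-1)=k^{\alpha-1}(n)$ for every $n\in\N$.

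I would present the generating-function argument as the cleanest route. Multiplying \eqref{ZT-K} by $(1-z)$ gives $(1-z)^{-(\alpha-1)}=(1-z)\sum_{n\geq0}k^\alpha(n)z^n=\sum_{n\geq0}k^\alpha(n)z^n-\sum_{n\geq1}k^\alpha(n-1)z^n$; comparing the coefficient of $z^n$ with the defining expansion $(1-z)^{-(\alpha-1)}=\sum_{n\geq0}k^{\alpha-1}(n)z^n$ yields exactly $k^\alpha(n)-k^\alpha(n-1)=k^{\alpha-1}(n)$ for $n\in\N$ (the $n=0$ coefficient reduces to the harmless $1=1$). Equivalently, one may invoke the semigroup property \eqref{semigroup-k} with $\beta=-1$: since $(1-z)^1=1-z$ forces $k^{-1}(0)=1$, $k^{-1}(1)=-1$ and $k^{-1}(j)=0$ for $j\geq2$, the convolution $\sum_{j=0}^{n}k^\alpha(n-j)k^{-1}(j)=k^{\alpha-1}(n)$ collapses to $k^\alpha(n)-k^\alpha(n-1)$.

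If one wishes to avoid generating functions, the same identity follows by a direct manipulation of \eqref{kernel_k_1}: factoring $\alpha(\alpha+1)\cdots(\alpha+n-2)/(n-1)!$ out of $k^\alpha(n)-k^\alpha(n-1)$ leaves the bracket $\tfrac{\alpha+n-1}{n}-1=\tfrac{\alpha-1}{n}$, and recombining gives $\tfrac{(\alpha-1)\alpha\cdots(\alpha+n-2)}{n!}$, which is precisely $k^{\alpha-1}(n)$; the case $n=1$ reads $k^\alpha(1)-k^\alpha(0)=\alpha-1=k^{\alpha-1}(1)$. I expect no genuine obstacle here; the only points that need a moment's care are the boundary case $n=1$, where $k^\alpha(n-1)=k^\alpha(0)=1$, and the remark that $\alpha-1\in(-1,0)$ is not a non-positive integer, so $k^{\alpha-1}$ is well defined by \eqref{kernel_k_1} — its positivity from Proposition \ref{prop-Cesaro}$(i)$ is of course lost, but it plays no role in the statement.
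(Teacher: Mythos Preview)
Your proof is correct. The paper's own one-line proof invokes the recurrence \eqref{inde-k} together with $z\Gamma(z)=\Gamma(z+1)$, which is exactly your third route (the direct manipulation of \eqref{kernel_k_1}): writing $k^\alpha(n)=\tfrac{\alpha+n-1}{n}k^\alpha(n-1)$ and simplifying the bracket to $\tfrac{\alpha-1}{n}$ is precisely the use of \eqref{inde-k}, and recognising the result as $k^{\alpha-1}(n)$ is where the Gamma functional equation enters. Your primary presentation via the generating function \eqref{ZT-K} (or, equivalently, the semigroup property \eqref{semigroup-k} with $\beta=-1$) is a genuinely different and very clean argument that the paper does not use; it has the advantage of requiring no explicit Gamma manipulations and of making the identity $k^\alpha-k^\alpha(\cdot-1)=k^{\alpha-1}$ transparent as a coefficient comparison, while the paper's route stays closer to the product formula and is perhaps more elementary for a reader unfamiliar with formal power series.
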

\begin{proof}
By \eqref{inde-k} and the property of  Gamma function $z\Gamma(z)=\Gamma(z+1)$ the result follows.
\end{proof}

The definitions of the fractional sum and fractional difference operators (in the sense of Riemann-Liouville and Caputo) with the sequence $ k^\alpha$ were initially proposed by C. Lizama in \cite{Lizama-(2015)}. Recently, R. Ponce in \cite{Ponce-(2020)} defines a generalization. We make a slightly  modification (since our index starts at one) to this definition.\\

\begin{definition} Let $f$ be a sequence defined on $\N_0^h$. For $\alpha \geq 0$, the $\alpha$-th fractional sum of $f$ is defined by means of the formula
$$\delta^{-\alpha}f(nh):=h^{\alpha}\displaystyle\sum_{j=1}^n k^{\alpha}(n-j)f(jh),\quad n\in\N.$$
Note that, for $\alpha=0$, $\delta^{-\alpha}f(nh)=f(nh).$
\end{definition}

As a direct consequence of the previous definition, the operator $\delta^{-\alpha}$ satisfies the semigroup property: 
\begin{align*}
\delta^{-\alpha}\delta^{-\beta}f(nh)=\delta^{-(\alpha+\beta)}f(nh), \ n\in\N.
\end{align*}

\begin{definition}\label{DRL} Let $0<\alpha<1$ and $f$  be a sequence defined on $\N_0^h$. The $\alpha$-th fractional $h$-difference in the sense of Riemann-Liouville of  $f$ is defined by
\begin{align}\label{RL}
\dR f(nh):=\dl \delta^{-(1-\alpha)}f(nh),\qquad n\in\N.
\end{align}
\end{definition}

\begin{proposition}\label{SumAndDeltaRL} For $0<\alpha<1$ the relation
$$\delta^{-\alpha} \dR f(nh)=f(nh), \ n\in\N,$$
holds.
\end{proposition}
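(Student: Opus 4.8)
The plan is to unravel both operators involved and reduce the identity to the semigroup property of the Cesàro sequence $k^\alpha$ together with the fact that $\delta^{-\alpha}$ and $\dl$ are, in a suitable sense, inverse to one another. First I would write out the left-hand side explicitly. By Definition \ref{DRL}, $\dR f(nh) = \dl\,\delta^{-(1-\alpha)}f(nh)$, so applying the fractional sum of order $\alpha$ gives
\[
\delta^{-\alpha}\dR f(nh) = \delta^{-\alpha}\dl\,\delta^{-(1-\alpha)}f(nh).
\]
The natural strategy is to show that $\delta^{-\alpha}\dl = \dl\,\delta^{-\alpha}$ on sequences (both equal $\delta^{-\alpha}$ composed with the backward difference, and the orders commute because they are realized by convolution with $k^{\alpha}$ and the difference operator acts as convolution with $h^{-1}(\delta_0-\text{shift})$, which commute), and then combine $\dl\,\delta^{-\alpha}\delta^{-(1-\alpha)} = \dl\,\delta^{-1}$ using the semigroup property of $\delta^{-\alpha}$ already recorded after the definition. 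Finally I would check directly that $\dl\,\delta^{-1}f(nh) = f(nh)$: indeed $\delta^{-1}f(nh) = h\sum_{j=1}^n k^1(n-j)f(jh) = h\sum_{j=1}^n f(jh)$ since $k^1(n)\equiv 1$, and then $\dl$ of this telescoping sum returns $f(nh)$.

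Alternatively — and this may be cleaner to write — I would avoid the commutation step and compute $\delta^{-\alpha}\dl\,g(nh)$ for a generic sequence $g$ by expanding both definitions and performing an Abel summation. Writing $\delta^{-\alpha}\dl g(nh) = h^{\alpha}\sum_{j=1}^n k^{\alpha}(n-j)\,\frac{g(jh)-g((j-1)h)}{h} = h^{\alpha-1}\sum_{j=1}^n k^{\alpha}(n-j)(g(jh)-g((j-1)h))$, one rearranges the telescoping-type sum using $k^{\alpha}(n-j) - k^{\alpha}(n-j-1)$; by Proposition \ref{delta-k} (with $\rho_\alpha(nh)=h^\alpha k^\alpha(n)$, so $\dl\rho_\alpha(nh)=h^{\alpha-1}k^{\alpha-1}(n)$) these differences reproduce $h^{\alpha-1}k^{\alpha-1}$-type weights, which is exactly the kernel of $\delta^{-(\alpha-1)}$; care is needed with the boundary term at $j=1$ where $g(0h)=g(0)$ enters, and one must track that $k^{\alpha}(0)=1$. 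Taking $g = \delta^{-(1-\alpha)}f$ then leaves $\delta^{-(\alpha-1)}\delta^{-(1-\alpha)}f$ plus boundary contributions; since $\delta^{-(1-\alpha)}f(0)=0$ (the sum is empty at $n=0$) the boundary terms vanish, and the semigroup property $\delta^{-(\alpha-1)}\delta^{-(1-\alpha)} = \delta^{0} = \mathrm{id}$ closes the argument.

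I expect the main obstacle to be bookkeeping at the lower summation limit rather than any conceptual difficulty: because the index convention here starts at $j=1$ (the "slight modification" flagged before Definition \ref{DRL}), one must be careful that the discrete integration-by-parts does not spuriously produce or drop a term involving $f(0)$, and that the empty-sum convention makes $\delta^{-(1-\alpha)}f(0) = 0$ so that no initial-value correction survives (this is what distinguishes the Riemann–Liouville identity here from the Caputo one). Once the indices are handled correctly, the identity follows purely formally from (\ref{semigroup-k}) and the elementary computation $k^1\equiv 1$. I would therefore present the short version: expand, apply Proposition \ref{delta-k} to recognize the telescoped kernel, invoke the semigroup property of $\delta^{-\alpha}$, and conclude.
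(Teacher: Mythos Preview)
Your plan is essentially the paper's own proof, reorganized. The paper expands $\delta^{-\alpha}\dR f(nh)$ into two double sums, reindexes, applies the semigroup identity \eqref{semigroup-k} in the form $k^{\alpha}*k^{1-\alpha}=k^{1}$, and telescopes; your second (Abel-summation) route does the telescoping first to produce the kernel $k^{\alpha-1}$, then uses $k^{\alpha-1}*k^{1-\alpha}=k^{0}=\delta_{0}$ to land on $f(nh)$. These are the same computation in a different order.

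One caveat on your first route: the commutation $\delta^{-\alpha}\dl=\dl\,\delta^{-\alpha}$ is \emph{not} valid on arbitrary sequences. A direct computation gives
\[
\delta^{-\alpha}\dl\, g(nh)=\dl\,\delta^{-\alpha} g(nh)-h^{\alpha-1}k^{\alpha}(n-1)\,g(0),
\]
so the convolution heuristic (``shift and $k^{\alpha}$ commute'') misses a boundary term precisely because the sum in $\delta^{-\alpha}$ starts at $j=1$ while $\dl$ reaches back to $g(0)$. This does not hurt you here, since $g=\delta^{-(1-\alpha)}f$ satisfies $g(0)=0$ by the empty-sum convention, a point you correctly flag in the alternative approach and in your final paragraph. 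Just make sure the written proof states the commutation only for sequences vanishing at $0$ (or, cleaner, present the Abel-summation version where the vanishing boundary term is explicit).
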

\begin{proof}
First of all, we adopt the notation: $\sum^{0}_{j=1} f(jh)=0$.  Then, by \eqref{semigroup-k}, we get
\begin{align*}
\delta^{-\alpha} \dR f(nh)&=h^\alpha \sum^{n}_{j=1} k^{\alpha}(n-j)\dR f(jh)\\
&=h^\alpha \sum^{n}_{j=1} k^{\alpha}(n-j)\dl  \delta^{-(1-\alpha)}f (jh) \\
&=h^{\alpha-1} \sum^{n}_{j=1} k^{\alpha}(n-j) \delta^{-(1-\alpha)}f (jh)-h^{\alpha-1} \sum^{n}_{j=1} k^{\alpha}(n-j) \delta^{-(1-\alpha)}f ((j-1)h)\\
&= \sum^{n}_{j=1} k^{\alpha}(n-j)\sum^{j}_{i=1}k^{1-\alpha}(j-i) f (ih)-\sum^{n}_{j=1} k^{\alpha}(n-j) \sum^{j-1}_{i=1}k^{1-\alpha}(j-1-i)f (ih)\\
&= \sum^{n}_{j=1} k^{\alpha}(n-j)\sum^{j}_{i=1}k^{1-\alpha}(j-i) f (ih)-\sum^{n}_{j=2} k^{\alpha}(n-j) \sum^{j-1}_{i=1}k^{1-\alpha}(j-1-i)f (ih)\\
&= \sum^{n-1}_{j=0} k^{\alpha}(n-1-j)\sum^{j}_{i=0}k^{1-\alpha}(j-i) f ((i+1)h)\\\
&\qquad \qquad  -\sum^{n-2}_{j=0} k^{\alpha}(n-2-j) \sum^{j}_{i=0}k^{1-\alpha}(j-i)f ((i+1)h)\\
&= \sum^{n-1}_{j=0}  f ((i+1)h) - \sum^{n-2}_{i=0}f ((i+1)h)=f(nh).
\end{align*}
Consequently, we have $\delta^{-\alpha} \dR f(nh)=f(nh)$ for all $n\in \N.$
\end{proof}

\begin{definition}\label{Caputo} Let $0<\alpha<1$ and $f$ be a sequence defined on $\N_0^h$. The Caputo fractional difference of order $\alpha$ is defined by
\begin{equation}\label{Caputo}
\dC f(nh):=\delta^{-(1-\alpha)}\dl f(nh),\quad n\in\N.
\end{equation}
\end{definition}
Note that the previous definition gives $\dC=\dl$ for $\alpha=1.$

The next result shows the relation between Caputo and Riemann fractional difference.
\begin{proposition}\label{relation-CR} Let $0<\alpha< 1$. Then the following
identity holds
$$\dC f(nh)=\dR(f(nh)-f(0)),\qquad n\in\N.$$
\end{proposition}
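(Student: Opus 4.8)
The plan is to unfold both sides using the definitions and reduce everything to a single manipulation of Cesàro-weighted sums. Recall that by Definition \ref{Caputo}, $\dC f(nh)=\delta^{-(1-\alpha)}\dl f(nh)$, while by Definition \ref{DRL}, $\dR g(nh)=\dl\,\delta^{-(1-\alpha)}g(nh)$ where here $g(nh):=f(nh)-f(0)$. So the claim amounts to the commutation-type identity
$$\delta^{-(1-\alpha)}\dl f(nh)=\dl\,\delta^{-(1-\alpha)}\bigl(f(\cdot h)-f(0)\bigr)(nh),\qquad n\in\N.$$
First I would write out the right-hand side explicitly: $\delta^{-(1-\alpha)}g(nh)=h^{1-\alpha}\sum_{j=1}^{n}k^{1-\alpha}(n-j)(f(jh)-f(0))$, and then apply $\dl$, which produces the difference quotient $\tfrac{1}{h}\bigl(\delta^{-(1-\alpha)}g(nh)-\delta^{-(1-\alpha)}g((n-1)h)\bigr)$.

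Next I would expand the left-hand side: since $\dl f(jh)=\tfrac{1}{h}(f(jh)-f((j-1)h))$, we get
$$\delta^{-(1-\alpha)}\dl f(nh)=h^{1-\alpha}\sum_{j=1}^{n}k^{1-\alpha}(n-j)\,\frac{f(jh)-f((j-1)h)}{h}=h^{-\alpha}\sum_{j=1}^{n}k^{1-\alpha}(n-j)\bigl(f(jh)-f((j-1)h)\bigr).$$
The heart of the matter is an Abel summation (summation by parts): I would reindex the second sum by $j\mapsto j+1$ and collect the coefficient of each $f(jh)$. Using the telescoping structure together with the recurrence that links $k^{1-\alpha}(n-j)$ at consecutive indices — effectively the fact encoded in Proposition \ref{delta-k} and \eqref{inde-k}, namely that the discrete "derivative" of $k^{1-\alpha}$ is $k^{-\alpha}$ up to the $h$-scaling — one shows that the rearranged left-hand side equals precisely the difference quotient appearing on the right-hand side. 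The term $f(0)$ enters on the right exactly to absorb the boundary contribution at $j=0$ (the lower endpoint) that the telescoping on the left leaves behind; this is the bookkeeping reason the Caputo difference "sees" the initial value while the Riemann–Liouville one does not.

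The main obstacle, as usual with these fractional-difference identities, is handling the endpoint terms of the two sums correctly — the convention $\sum_{j=1}^{0}(\cdots)=0$, the value $k^{1-\alpha}(0)=1$, and keeping track of which index the shift $j\mapsto j-1$ sends to zero. A clean way to avoid sign errors is to note that $\dl$ and $\delta^{-(1-\alpha)}$ are both linear and that $\delta^{-(1-\alpha)}$ applied to the constant sequence $f(0)$ gives $h^{1-\alpha}f(0)\sum_{j=1}^{n}k^{1-\alpha}(n-j)=h^{1-\alpha}f(0)\,k^{2-\alpha}(n-1)$ by \eqref{semigroup-k} with the Kronecker delta $k^0$, so that $\dl$ of it is $h^{-\alpha}f(0)\,k^{1-\alpha}(n-1)$ (using \eqref{inde-k} or Proposition \ref{delta-k}); subtracting this known quantity reduces the problem to the already-established commutation $\delta^{-(1-\alpha)}\dl f=\dl\,\delta^{-(1-\alpha)}f$ minus the correction, which can be checked directly. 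Alternatively, one can apply the operator $\delta^{-\alpha}$ to both candidate expressions and invoke Proposition \ref{SumAndDeltaRL} and the semigroup property of $\delta^{-\alpha}$ to verify equality after this injective-type operator, then conclude. I expect the generating-function / $\mathcal{Z}$-transform viewpoint (multiplying by $z^n$ and summing, where $\delta^{-\alpha}$ corresponds to multiplication by $h^\alpha(1-z)^{-\alpha}$ and $\dl$ to multiplication by $h^{-1}(1-z)$) to make the identity transparent, but for a short proof the direct Abel-summation computation is the most economical route.
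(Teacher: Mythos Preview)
Your approach is essentially the same as the paper's: expand the Caputo side as $h^{-\alpha}\sum_{j=1}^{n}k^{1-\alpha}(n-j)\bigl(f(jh)-f((j-1)h)\bigr)$, reindex the second piece, and match against the expanded Riemann--Liouville difference of $f(\cdot h)-f(0)$. One small over-complication: you do not need Proposition~\ref{delta-k} or the recurrence \eqref{inde-k} for $k^{1-\alpha}$ at all. After reindexing, the only ``extra'' term is $-h^{-\alpha}k^{1-\alpha}(n-1)f(0)$ on the Caputo side, and on the Riemann--Liouville side the $f(0)$-contribution is $-h^{-\alpha}f(0)\sum_{j=1}^{n-1}\bigl(k^{1-\alpha}(n-j)-k^{1-\alpha}(n-1-j)\bigr)-h^{-\alpha}f(0)$, which telescopes to the same thing; no property of $k^{1-\alpha}$ beyond $k^{1-\alpha}(0)=1$ is used. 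The paper does exactly this, writing the boundary term $h^{-\alpha}k^{1-\alpha}(n)f(0)$ (their indices are shifted by one relative to yours) as a difference of two full sums of $k^{1-\alpha}$ so that the expression factors cleanly into sums of $k^{1-\alpha}(n-j)\bigl(f(jh)-f(0)\bigr)$.
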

\begin{proof}
By  \eqref{Caputo} and \eqref{RL},
\begin{eqnarray*}
\dC f(nh)&=&h^{1-\alpha}\sum^{n}_{j=1}k^{1-\alpha}(n-j)\dl f(jh) \\
&=&h^{-\alpha}\sum^{n}_{j=1}k^{1-\alpha}(n-j)f(jh) -h^{-\alpha}\sum^{n}_{j=1}k^{1-\alpha}(n-j)f((j-1)h) \\
&=&h^{-\alpha}\sum^{n}_{j=0}k^{1-\alpha}(n-j)f(jh) -h^{-\alpha}\sum^{n-1}_{j=0}k^{1-\alpha}(n-1-j)f(jh)-h^{-\alpha}k^{1-\alpha}(n)f(0) \\
&=&h^{-\alpha}\sum^{n}_{j=0}k^{1-\alpha}(n-j)f(jh) -h^{-\alpha}\sum^{n-1}_{j=0}k^{1-\alpha}(n-1-j)f(jh))\\
& &\hspace*{3cm} -h^{-\alpha}f(0)\left(\sum_{j=0}^{n} k^{1-\alpha}(n-j)-\sum_{j=0}^{n-1} k^{1-\alpha}(n-1-j) \right) \\
&=&h^{-\alpha}\sum^{n}_{j=0}k^{1-\alpha}(n-j)(f(jh)-f(0)) -h^{-\alpha}\sum^{n-1}_{j=0}k^{1-\alpha}(n-1-j)(f(jh)-f(0)).
\end{eqnarray*}
\end{proof}


\begin{corollary} Let $0<\alpha<1$ and $f$ be  a sequence defined on $\N^{h}_0$. Then the following identity holds
\begin{align*}
\delta^{-\alpha} \dC f(nh)=f(nh)-f(0).
\end{align*}
\end{corollary}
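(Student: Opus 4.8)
The plan is to deduce the identity directly from the two results that immediately precede it in the excerpt, namely Proposition~\ref{relation-CR} and Proposition~\ref{SumAndDeltaRL}, so that no new computation is needed.

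First I would fix $0<\alpha<1$ and a sequence $f$ on $\N_0^h$, and introduce the auxiliary sequence $g$ on $\N_0^h$ given by $g(nh):=f(nh)-f(0)$. By Proposition~\ref{relation-CR} we have the pointwise identity $\dC f(nh)=\dR g(nh)$ for every $n\in\N$. Next I would apply the fractional sum operator $\delta^{-\alpha}$ to both sides; since $\delta^{-\alpha}$ acts on arbitrary sequences on $\N_0^h$ and $g$ is such a sequence, Proposition~\ref{SumAndDeltaRL} applies verbatim to $g$ and yields $\delta^{-\alpha}\dR g(nh)=g(nh)$ for all $n\in\N$. Combining the two facts gives $\delta^{-\alpha}\dC f(nh)=\delta^{-\alpha}\dR g(nh)=g(nh)=f(nh)-f(0)$, which is exactly the asserted identity.

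There is essentially no obstacle: the only point worth checking is that Proposition~\ref{SumAndDeltaRL} is indeed stated for an \emph{arbitrary} sequence on $\N_0^h$, hence no hypothesis on $g$ (such as $g(0)=0$, which does hold here but is irrelevant) is required. As an alternative route one could instead unwind the definitions of $\delta^{-\alpha}$, $\dC=\delta^{-(1-\alpha)}\dl$, and the backward difference $\dl$, and carry out a telescoping sum using the semigroup property \eqref{semigroup-k} of the Ces\`aro numbers, mirroring the proof of Proposition~\ref{SumAndDeltaRL}; but passing through Propositions~\ref{relation-CR} and \ref{SumAndDeltaRL} makes the argument a two-line corollary and avoids repeating that calculation.
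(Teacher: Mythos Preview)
Your proof is correct and follows exactly the same approach as the paper, which simply says the result is immediate by Propositions~\ref{SumAndDeltaRL} and~\ref{relation-CR}. Your argument just spells out the one-line deduction explicitly by introducing the auxiliary sequence $g(nh)=f(nh)-f(0)$.
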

\begin{proof}
The result is immediate by Propositions \ref{SumAndDeltaRL} and \ref{relation-CR}.
\end{proof}

The Proposition \ref{relation-CR}  allows to establish the following property between the fractional difference operators.

\begin{proposition}
For all $0<\alpha<1$ and $0  \leq \beta   $ the relation holds
\begin{align*}
\delta^{-(\beta+1)} {_{RL}\delta}^{1-\alpha}f(nh))= \delta^{-(\beta+\alpha)}   f(nh), \qquad n\in\N.
\end{align*}
\end{proposition}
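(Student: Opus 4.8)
The plan is to derive the identity from the inversion formula of Proposition~\ref{SumAndDeltaRL} together with the semigroup property of the fractional sums, so that no fresh computation with Ces\`aro numbers is required. First I would note that $0<\alpha<1$ forces $0<1-\alpha<1$, so Proposition~\ref{SumAndDeltaRL} applies verbatim with $\alpha$ replaced by $1-\alpha$, giving
$$\delta^{-(1-\alpha)}\,{_{RL}}\delta^{1-\alpha}f(nh)=f(nh),\qquad n\in\N.$$
Next I would split the outer operator: since $\beta\geq0$ and $0<\alpha<1$, the exponents $\beta+\alpha$ and $1-\alpha$ are both nonnegative and add up to $\beta+1$, so the semigroup property gives $\delta^{-(\beta+1)}=\delta^{-(\beta+\alpha)}\delta^{-(1-\alpha)}$. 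Applying this to the sequence ${_{RL}}\delta^{1-\alpha}f$ and inserting the previous display,
$$\delta^{-(\beta+1)}\,{_{RL}}\delta^{1-\alpha}f(nh)=\delta^{-(\beta+\alpha)}\Big(\delta^{-(1-\alpha)}\,{_{RL}}\delta^{1-\alpha}f\Big)(nh)=\delta^{-(\beta+\alpha)}f(nh),$$
which is exactly the asserted relation. Only values at indices in $\N$ ever occur here, so the empty-sum convention at $n=0$ does not intervene.

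If instead one wants to follow the hint that Proposition~\ref{relation-CR} is the operative fact, I would write ${_{RL}}\delta^{1-\alpha}f={_C}\delta^{1-\alpha}f+{_{RL}}\delta^{1-\alpha}[f(0)]$, where $[f(0)]$ denotes the constant sequence, using linearity of ${_{RL}}\delta^{1-\alpha}$ and Proposition~\ref{relation-CR}. The Caputo part is dealt with by the preceding corollary, $\delta^{-(1-\alpha)}\,{_C}\delta^{1-\alpha}f(nh)=f(nh)-f(0)$, after the same decomposition $\delta^{-(\beta+1)}=\delta^{-(\beta+\alpha)}\delta^{-(1-\alpha)}$, yielding $\delta^{-(\beta+\alpha)}f(nh)-\delta^{-(\beta+\alpha)}[f(0)](nh)$; the constant part is handled once more by Proposition~\ref{SumAndDeltaRL}, yielding exactly $\delta^{-(\beta+\alpha)}[f(0)](nh)$, so the two constant contributions cancel and the same conclusion is reached. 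A third, completely elementary route is to unfold ${_{RL}}\delta^{1-\alpha}f=\dl\,\delta^{-\alpha}f$ via Definition~\ref{DRL}, use the telescoping identity $\delta^{-1}\dl g(nh)=g(nh)-g(0)$ (valid because $k^{1}\equiv1$), observe that $\delta^{-\alpha}f(0)=0$ as a void sum since $\alpha>0$, and close with the semigroup property.

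I do not expect a genuine obstacle; the only delicate point is the bookkeeping of exponents and of summation limits. Concretely, one must check that $\beta+1$ really decomposes as $(\beta+\alpha)+(1-\alpha)$ with both summands nonnegative, so that the semigroup property is licensed, and---should the telescoping route be chosen---that the base value $\delta^{-\alpha}f(0)$ truly vanishes for $\alpha>0$, since an off-by-one there would introduce a spurious boundary term and ruin the identity.
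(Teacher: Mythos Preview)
Your proposal is correct. The first route you give---apply Proposition~\ref{SumAndDeltaRL} with the exponent $1-\alpha$ and then split $\delta^{-(\beta+1)}=\delta^{-(\beta+\alpha)}\delta^{-(1-\alpha)}$ via the semigroup property---is shorter and cleaner than the paper's own argument. The paper instead follows essentially your second route: it invokes Proposition~\ref{relation-CR} to write ${_{RL}}\delta^{1-\alpha}f={_C}\delta^{1-\alpha}f+h^{\alpha-1}k^{\alpha}(\cdot-1)f(0)$, applies $\delta^{-(\beta+1)}$ to each piece, handles the Caputo piece via the semigroup property and the telescoping $\delta^{-1}\dl f=f-f(0)$, and then checks by explicit Ces\`aro convolution that the constant-correction piece equals $\delta^{-(\beta+\alpha)}[f(0)]$, cancelling the boundary term. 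Your first approach sidesteps this cancellation altogether by absorbing it into the inversion already established in Proposition~\ref{SumAndDeltaRL}; the paper's route, in exchange, exhibits the cancellation mechanism explicitly and motivates why Proposition~\ref{relation-CR} was recorded immediately beforehand.
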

\begin{proof}
By Proposition \ref{relation-CR}, we have
\begin{align*}
\delta^{-(\beta+1)} {_{RL}\delta}^{1-\alpha}f(nh))&=h^{\beta+1}\sum^{n}_{j=1}k^{\beta+1}(n-j){_{RL}\delta}^{1-\alpha}f(jh) \\
&=h^{\beta+1}\sum^{n}_{j=1}k^{\beta+1}(n-j){_{C}\delta}^{1-\alpha}f(jh) +h^{\beta+\alpha}\sum^{n}_{j=1}k^{\beta+1}(n-j)  k^{\alpha}(j-1)f(0) \\
&=h^{\beta+1}\sum^{n}_{j=1}k^{\beta+1}(n-j){_{C}\delta}^{1-\alpha}f(jh) +h^{\beta+\alpha}\sum^{n-1}_{j=0}k^{\beta+1}(n-1-j)  k^{\alpha}(j)f(0) \\
&= \delta^{-(\beta+\alpha)} \delta^{-1} \delta_{\text{left}}  f(nh) +h^{\alpha+\beta} \sum^{n-1}_{j=0} k^{\alpha+\beta}(n-1-j)f(0) \\
&= \delta^{-(\beta+\alpha)}   f(nh)-  \delta^{-(\beta+\alpha)} f(0)+h^{\alpha+\beta} \sum^{n}_{j=1} k^{\alpha+\beta}(n-j)f(0) \\
&= \delta^{-(\beta+\alpha)}   f(nh).
\end{align*}
\end{proof}

\section{Special functions and subordination formulae}\label{dsftsf}

In this section we introduce a discrete version of the Mittag-Leffler and scaled Wright functions, which generalize the ones defined in \cite{Alvarez-Diaz-Lizama-2020}. Also, we present some interesting properties which will be useful along the paper.

Let $\alpha,\beta,h>0$ and $\lambda\in\C$. The  Mittag-Leffler sequences are given by
\begin{align}\label{MT-discrete}
\mathcal{E}^{h}_{\alpha,\beta}(\lambda,n):=\frac{h^{\beta}}{(n-1)!}\sum_{j=0}^{\infty}\frac{\Gamma(\alpha j+\beta+n-1)}{\Gamma(\alpha j+\beta)}(h^{\alpha}\lambda)^{j}, \quad n\in\N, \quad |\lambda|<\frac{1}{h^{\alpha}}.
\end{align}
The convergence of previous series can be justified by \eqref{asin-kernel}. Using the  Ces\`aro numbers \eqref{kernel_k}, one can rewrite \eqref{MT-discrete} as
$$\mathcal{E}^{h}_{\alpha,\beta}(\lambda,n)=\sum_{j=0}^{\infty}h^{\alpha j+\beta}k^{\alpha j + \beta}(n-1)\lambda^{j},  \quad   n\in\N, \quad  |\lambda|<\frac{1}{h^{\alpha}}.$$
Particularly, note that
$$
\mathcal{E}^{h}_{1,1}(\lambda,n)=\sum_{j=0}^{\infty}(h\lambda)^{j}k^{ j+1 }(n-1)=\sum_{j=0}^{\infty}(h\lambda)^{j}k^{ n }(j)=\frac{1}{h^{n}}(1/h-\lambda)^{-n},\quad |\lambda|<\frac{1}{h}.$$


\begin{proposition}
Let $0<\alpha<1$, $0<\beta,h$ and $\lambda\in\C$ such that $|\lambda|<\frac{1}{h^{\alpha}}$. The sequence
\begin{align*}
\varepsilon(nh):=\left\{ \begin{array}{lcc}
              \mathcal{E}^{h}_{\alpha,1}(\lambda,n),  &\qquad   n\in\N, \\ \\
 1,  &\qquad    n= 0
             \end{array}
   \right.
\end{align*}
is solution of the fractional difference problem
\begin{align*}
_{C}\delta^{\alpha}\varepsilon(nh)=\lambda \varepsilon(nh), \qquad n\in\N.
\end{align*}
\end{proposition}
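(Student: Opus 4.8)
The plan is to verify the identity by direct substitution, using the series expansion of $\mathcal{E}^h_{\alpha,1}$ in terms of Cesàro numbers together with the definition of the Caputo difference $\dC = \delta^{-(1-\alpha)}\dl$ and the semigroup property \eqref{semigroup-k}. First I would compute $\dl \varepsilon(nh)$. Since $\varepsilon(0)=1$ and $\varepsilon(nh)=\sum_{j\geq 0}h^{\alpha j+1}k^{\alpha j+1}(n-1)\lambda^j$ for $n\geq 1$, I would apply the backward difference termwise: for $n\geq 2$ this uses Proposition \ref{delta-k} (in the form $\dl(h^\gamma k^\gamma(n-1))=h^{\gamma-1}k^{\gamma-1}(n-1)$ after re-indexing), giving $\dl\varepsilon(nh)=\sum_{j\geq 0}h^{\alpha j}k^{\alpha j}(n-1)\lambda^j$. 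The $n=1$ case must be checked separately: $\dl\varepsilon(h)=(\varepsilon(h)-1)/h$, and I would confirm the same series formula holds there (the $j=0$ term contributes $k^0(0)=1$, which is cancelled by the $-1$, matching $\sum_{j\geq 1}h^{\alpha j}k^{\alpha j}(0)\lambda^j=0$ plus bookkeeping — one has to be careful that the convention $\sum_{i=1}^0=0$ makes everything consistent). Note the $j=0$ term of $\dl\varepsilon$ is $h^0 k^0(n-1)=\delta_0(n-1)$, which is supported only at $n=1$.

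Next I would apply $\delta^{-(1-\alpha)}$ to this expression. By definition,
\begin{align*}
\dC\varepsilon(nh)=h^{1-\alpha}\sum_{i=1}^n k^{1-\alpha}(n-i)\,\dl\varepsilon(ih)
=h^{1-\alpha}\sum_{i=1}^n k^{1-\alpha}(n-i)\sum_{j\geq 0}h^{\alpha j}k^{\alpha j}(i-1)\lambda^j.
\end{align*}
Swapping the (absolutely convergent) sums and shifting the inner index to run from $0$, the sum over $i$ becomes $\sum_{i=0}^{n-1}k^{1-\alpha}(n-1-i)k^{\alpha j}(i)$, which by the semigroup property \eqref{semigroup-k} equals $k^{1-\alpha+\alpha j}(n-1)$. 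Hence $\dC\varepsilon(nh)=\sum_{j\geq 0}h^{\alpha j+1-\alpha}k^{\alpha j+1-\alpha}(n-1)\lambda^j$.

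Finally I would recognize this last series as $\lambda\,\varepsilon(nh)$. Indeed, re-indexing $j\mapsto j+1$ gives $\sum_{j\geq 0}h^{\alpha(j+1)+1-\alpha}k^{\alpha(j+1)+1-\alpha}(n-1)\lambda^{j+1}=\lambda\sum_{j\geq 0}h^{\alpha j+1}k^{\alpha j+1}(n-1)\lambda^j=\lambda\,\mathcal{E}^h_{\alpha,1}(\lambda,n)=\lambda\varepsilon(nh)$; the shift is legitimate because the $j=0$ term of the pre-shift series is $h^{1-\alpha}k^{1-\alpha}(n-1)\cdot\lambda^0$ — wait, one must check this term vanishes, which it does not in general, so the correct bookkeeping is that the series $\sum_{j\geq 0}h^{\alpha j+1-\alpha}k^{\alpha j+1-\alpha}(n-1)\lambda^j$ has its $j=0$ term equal to $h^{1-\alpha}k^{1-\alpha}(n-1)$, and this is precisely cancelled by contributions I need to track from the $n=1$ delta term $\delta_0(n-1)$ feeding through $\delta^{-(1-\alpha)}$. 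The cleanest route is actually to keep the $j=0$ term of $\dl\varepsilon$ explicit: it produces $h^{1-\alpha}k^{1-\alpha}(n-1)$ in $\dC\varepsilon(nh)$, and the remaining terms ($j\geq 1$) reassemble, after the index shift, into exactly $\lambda\varepsilon(nh)$ minus that same $j=0$ artifact, so they cancel. I expect this reconciliation of the lowest-order term — equivalently, the careful handling of the $n=1$ boundary and the empty-sum convention — to be the only real subtlety; the rest is the semigroup identity and termwise manipulation of absolutely convergent series, whose convergence for $|\lambda|<h^{-\alpha}$ is guaranteed by the asymptotics \eqref{asin-kernel}.
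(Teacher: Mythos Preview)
Your route differs from the paper's. You attack $\dC\varepsilon$ directly from its definition $\dC=\delta^{-(1-\alpha)}\dl$, applying the backward difference first and then the fractional sum. The paper instead computes the Riemann--Liouville difference $\dR\varepsilon=\dl\,\delta^{-(1-\alpha)}\varepsilon$ and only then invokes Proposition~\ref{relation-CR} to pass to Caputo. The payoff of the paper's order is that $\delta^{-(1-\alpha)}\varepsilon(nh)=h^{1-\alpha}\sum_{j=1}^{n}k^{1-\alpha}(n-j)\varepsilon(jh)$ involves only $\varepsilon(jh)$ for $j\ge 1$, so the separately prescribed value $\varepsilon(0)=1$ never contaminates the series manipulation; it appears only at the last step, cleanly, as the $f(0)$ correction in Proposition~\ref{relation-CR}. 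Your order meets $\varepsilon(0)$ immediately in $\dl\varepsilon(h)$, and this is exactly where your argument has a gap.

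Your claimed cancellation is wrong. Granting your termwise formula $\dl\varepsilon(nh)=\sum_{j\ge 0}h^{\alpha j}k^{\alpha j}(n-1)\lambda^j$ for all $n\ge 1$, the semigroup step gives $\dC\varepsilon(nh)=\sum_{j\ge 0}h^{\alpha j+1-\alpha}k^{\alpha j+1-\alpha}(n-1)\lambda^j$, and the $j\ge 1$ part reindexes to \emph{exactly} $\lambda\,\varepsilon(nh)$ --- there is no ``minus that same $j=0$ artifact'' hidden in it. The leftover $j=0$ term $h^{1-\alpha}k^{1-\alpha}(n-1)$ is genuinely uncancelled at this stage. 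The only available counterterm is the $n=1$ boundary correction you flag but never carry out: your termwise formula for $\dl\varepsilon$ actually fails at $n=1$ (since $\varepsilon(0)=1$ is defined ad hoc, not by the series), and that discrepancy, once pushed through $\delta^{-(1-\alpha)}$, is what must kill the $j=0$ piece. Until you compute it explicitly the proof is incomplete. The paper's Riemann--Liouville detour followed by Proposition~\ref{relation-CR} is precisely the device that packages this bookkeeping cleanly.
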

\begin{proof} For $n\in\N$, we have
\begin{align*}
\delta^{1-\alpha} \varepsilon(nh)&=h^{1-\alpha}\sum^{n}_{j=1}k^{1-\alpha}(n-j) \mathcal{E}^{h}_{\alpha,1}(\lambda,j) \\
&=h^{1-\alpha}\sum^{n}_{j=1}k^{1-\alpha}(n-j) \sum_{w=0}^{\infty}h^{\alpha w+1}k^{\alpha w + 1}(j-1)\lambda^{w}\\
&=\sum_{w=0}^{\infty}h^{2-\alpha+\alpha w}\lambda^{w} \sum^{n}_{j=1}  k^{1-\alpha}(n-j)k^{\alpha w + 1}(j-1)\\
&=\sum_{w=0}^{\infty}h^{2-\alpha+\alpha w}\lambda^{w}\sum^{n-1}_{j=0}  k^{1-\alpha}(n-1-j)k^{\alpha w + 1}(j)\\
&=\sum_{w=0}^{\infty}h^{2-\alpha+\alpha w}  k^{2-\alpha+\alpha w}(n-1)\lambda^{w}\\
&=\sum_{w=1}^{\infty}h^{2-\alpha+\alpha w}  k^{2-\alpha+\alpha w}(n-1)\lambda^{w}+h^{2-\alpha}  k^{2-\alpha}(n-1),
\end{align*}
where have used \eqref{semigroup-k}. Now, by Proposition \ref{delta-k}, we get
\begin{align*}
\delta_{\text{left}}h^{2-\alpha}  k^{2-\alpha}(n-1)=h^{1-\alpha}  k^{1-\alpha}(n-1)
\end{align*}
and
\begin{align*}
\delta_{\text{left}} h^{2-\alpha+\alpha w}  k^{2-\alpha+\alpha w}(n-1)=h^{1-\alpha+\alpha w}  k^{1-\alpha+\alpha w}(n-1).
\end{align*}
Then,
\begin{align*}
_{R}\delta^{\alpha} \varepsilon(nh)=\sum_{w=0}^{\infty}h^{1+\alpha w}  k^{1+\alpha w}(n-1)\lambda^{w}+h^{1-\alpha}  k^{1-\alpha}(n-1)u(0).
\end{align*}
Hence, by Proposition \ref{relation-CR} we get the result.
\end{proof}

Next, let us define the discrete scaled Wright function.
\begin{definition}\label{Def-Wright-discreta}  Let $0 <\alpha< 1$ and  $0 \leq \beta$ be given. For $n\in \N_{0}$ and  $h>0$, the discrete scaled Wright function $\varphi^{h}_{\alpha,\beta}$ is defined by
\begin{equation}\label{Wright-discreta}
\varphi^h_{\alpha,\beta}(n,j):=
              \frac{1}{2 \pi i} \displaystyle\int_{\Upsilon} \frac{1}{z^{n+1}}\frac{\left(1-h(\frac{1-z}{h})^{\alpha} \right)^{j}}{(\frac{1-z}{h})^{\beta}}\,dz,  \quad  j\in\mathbb{N}_0,
\end{equation}
where $\Upsilon$ is the path  oriented counterclockwise given by the circle centered at the origin and radius $0<r<1.$ Note that if $|z|<1,$ then $\frac{1-z}{h}$ belongs to the disc centered at $1/h$ and radius $1/h.$ So, for each $j\in\N_0,$ the function $z\to \frac{\left(1-h(\frac{1-z}{h})^{\alpha} \right)^{j}}{(\frac{1-z}{h})^{\beta}}$ is holomorphic on the unit disc. Therefore, by the Cauchy formula for the derivatives, we have defined $\varphi^h_{\alpha,\beta}(n,j)$ as the $n$-coefficient of the power series centered at the origin of such holomorphic function.
\end{definition}

In the following proposition, we present some useful properties of the discrete scaled Wright function $\varphi^h_{\alpha,\beta}$. Many of them follow the spirit of the analogue ones in the continuous case, see \cite[Theorem 3]{Abadia-Miana}.\\


\begin{proposition}\label{Prop-Levy}  Let $0 <\alpha < 1$, $0\leq \beta ,$ $0<h$ and $n,j\in\N_0$. The following properties hold:
\begin{enumerate}[$(i)$]
\item  $\varphi^{h}_{\alpha,\beta}(n,j)= h^{\beta}\displaystyle\sum^{j}_{i=0} \binom{j}{i}(-1)^{i}h^{i-\alpha i}k^{\beta-\alpha i}(n).$
\item $\varphi^h_{\alpha,\beta+\gamma}(n,j)=h^{\beta} \displaystyle\sum_{i=0}^{n} k^{\beta }(n-i) \varphi^{h}_{\alpha,\gamma}(i,j), \quad \gamma>0.$
\item $ \dfrac{1}{h^{n}\Gamma(n)}\displaystyle\int_0^{\infty}e^{-s/h}s^{n-1} \psi_{\alpha,\beta}(s,t) \ ds= e^{-t/h}\displaystyle\sum_{j=1}^{\infty} \varphi^h_{\alpha,\beta}(n-1,j-1)\frac{t^{j-1}}{ h^{j}(j-1)!},\  t>0 ,$\\ \\
where $\psi_{\alpha,\beta}$ is given by \eqref{ScaledWright}.
\item  $\displaystyle\sum^{\infty}_{j=1}\varphi^{h}_{\alpha,\beta}(n-1,j-1)\frac{1}{h^{j}}\left(\frac{1}{h}-\lambda\right)^{-j}=\frac{1}{h}\mathcal{E}^h_{\alpha,\alpha+\beta}(\lambda,n)$, \ $n\in\N, \ \vert \lambda\vert<\dfrac{1}{h^{\alpha}}$.
\item $\varphi^{h}_{\alpha,\beta}(n,j)-\varphi^{h}_{\alpha,\beta}(n,j+1) =h\varphi^{h}_{\alpha,\beta-\alpha}(n,j).$
\item $\varphi^h_{\alpha,0}(n,j+1)=\displaystyle\sum_{p=0}^{n}\varphi^h_{\alpha,0}(n-p,j)\varphi^h_{\alpha,0}(p,1)$.
\item   $\varphi^{h}_{\alpha,\beta}(n,j)\geq 0, \quad 0<h\leq 1.$
\item  $\displaystyle\sum_{i=0}^{\infty} \varphi^h_{\alpha,0}(i,j)=1$.
\item $\displaystyle\sum_{j=0}^{\infty} \varphi^h_{\alpha, \beta}(n,j)k^{\gamma}(j)=h^{\beta+\gamma(\alpha-1)}k^{\beta+\gamma\alpha}(n)$.

\end{enumerate}
\end{proposition}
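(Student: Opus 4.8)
The strategy is to establish the nine items essentially in the order listed, deriving each from the integral representation \eqref{Wright-discreta} together with the earlier facts about Ces\`aro numbers (Proposition \ref{prop-Cesaro}) and the scaled Wright and Mittag-Leffler objects. The central computational device is that $\varphi^h_{\alpha,\beta}(n,j)$ is, by the Cauchy integral formula noted in Definition \ref{Def-Wright-discreta}, exactly the $n$-th Taylor coefficient at $z=0$ of the holomorphic function $z\mapsto \left(1-h\bigl(\tfrac{1-z}{h}\bigr)^{\alpha}\right)^{j}\bigl(\tfrac{1-z}{h}\bigr)^{-\beta}$; hence every identity among these functions can be checked at the level of generating functions on the disc $|z|<1$, and conversely one reads off coefficients via \eqref{ZT-K}.

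For $(i)$ I would expand $\left(1-h(\tfrac{1-z}{h})^{\alpha}\right)^{j}$ by the binomial theorem, multiply by $h^{\beta}(1-z)^{-\beta}\cdot h^{-\beta}$... more precisely write $(\tfrac{1-z}{h})^{\alpha i-\beta}=h^{\beta-\alpha i}(1-z)^{\alpha i-\beta}$, and then identify the $z^{n}$-coefficient of $(1-z)^{-( \beta-\alpha i)}$ as $k^{\beta-\alpha i}(n)$ using \eqref{ZT-K}; collecting the $h$-powers yields the stated formula. Item $(ii)$ is then immediate from $(i)$ or, more cleanly, from the factorization $(\tfrac{1-z}{h})^{-(\beta+\gamma)}=h^{\beta}(1-z)^{-\beta}(\tfrac{1-z}{h})^{-\gamma}$ at the generating-function level, multiplying the two power series and using $k^{\beta}$ as the coefficients of $(1-z)^{-\beta}$. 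Item $(v)$ likewise follows by factoring $\left(1-h(\tfrac{1-z}{h})^{\alpha}\right)^{j}-\left(1-h(\tfrac{1-z}{h})^{\alpha}\right)^{j+1}=\left(1-h(\tfrac{1-z}{h})^{\alpha}\right)^{j}\cdot h(\tfrac{1-z}{h})^{\alpha}$ and recognizing the extra $(\tfrac{1-z}{h})^{\alpha}$ as lowering $\beta$ to $\beta-\alpha$; item $(vi)$ is the $\beta=0$, multiplicativity-in-$j$ statement, obtained by writing $\left(1-h(\tfrac{1-z}{h})^{\alpha}\right)^{j+1}$ as the product of the $j$-th power and the first power and reading the Cauchy product. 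For $(viii)$ and $(ix)$ I would evaluate the generating function at $z=1$ (a boundary value that must be justified by the positivity/summability from $(vii)$ and $(i)$): at $z=1$, $1-h(\tfrac{1-z}{h})^{\alpha}=1$, so $\sum_i \varphi^h_{\alpha,0}(i,j)=1$; and $(ix)$ is the coefficient identity obtained by summing $\varphi^h_{\alpha,\beta}(n,j)k^{\gamma}(j)$ against \eqref{ZT-K} in a new variable, i.e. computing $\sum_j k^{\gamma}(j)\bigl(1-h(\tfrac{1-z}{h})^{\alpha}\bigr)^{j} = \bigl(h(\tfrac{1-z}{h})^{\alpha}\bigr)^{-\gamma}=h^{-\gamma}(\tfrac{1-z}{h})^{-\gamma\alpha}$, then multiplying by $(\tfrac{1-z}{h})^{-\beta}$ and extracting the $z^n$-coefficient with the semigroup property \eqref{semigroup-k} packaged into $k^{\beta+\gamma\alpha}$; the bookkeeping of the $h$-exponents gives $h^{\beta+\gamma(\alpha-1)}$.

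Items $(iii)$ and $(iv)$ are the genuinely analytic ones. For $(iv)$ I would start from the series definition of $\mathcal{E}^h_{\alpha,\alpha+\beta}(\lambda,n)$ in powers of $\lambda$, insert the known scalar identity $\mathcal{E}^h_{1,1}(\lambda,n)=h^{-n}(1/h-\lambda)^{-n}$ to recognize $\tfrac1{h^j}(1/h-\lambda)^{-j}$, and match coefficient-by-coefficient in $\lambda$ using $(i)$; alternatively, and more in the spirit of the paper, substitute the contour integral for $\varphi^h_{\alpha,\beta}$, interchange sum and integral (justified for $|\lambda|<h^{-\alpha}$ and $|z|=r<1$), sum the geometric-type series $\sum_j \bigl(1-h(\tfrac{1-z}{h})^{\alpha}\bigr)^{j-1}\bigl(\tfrac1h(\tfrac1h-\lambda)^{-1}\bigr)^{j}$ in closed form, and then evaluate the resulting contour integral by residues/coefficient extraction to land on the Mittag-Leffler series. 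For $(iii)$ the plan is to use the integral representation $\psi_{\alpha,\beta}(s,t)=\tfrac1{2\pi i}\int_{Ha}z^{-\beta}e^{sz-tz^{\alpha}}\,dz$ recalled after \eqref{ScaledWright}, plug it into the left-hand side, perform the $s$-integral $\tfrac1{h^n\Gamma(n)}\int_0^\infty e^{-s/h}s^{n-1}e^{sz}\,ds = (1-hz)^{-n}h^{-n}$ (valid when $\mathrm{Re}(z)<1/h$, which holds on the relevant part of $Ha$ after deforming), and then recognize the remaining Hankel integral, after the substitution $w=(1-hz)/h$ so that $z=(1-hw)/h$... rather $z = \tfrac{1}{h} - w$, as precisely the contour integral defining $\varphi^h_{\alpha,\beta}$ once the factor $e^{-tz^{\alpha}}$ is expanded in powers of $t$ and the $e^{-t/h}$ is pulled out; matching powers of $t$ against $\tfrac{t^{j-1}}{h^j(j-1)!}$ identifies the coefficient as $\varphi^h_{\alpha,\beta}(n-1,j-1)$.

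\textbf{Main obstacle.} The routine parts are $(i),(ii),(v),(vi),(ix)$, which are pure generating-function algebra. The delicate points, and where I would spend the most care, are: (a) the positivity statement $(vii)$ for $0<h\le 1$ — the formula in $(i)$ is an alternating sum of Ces\`aro numbers, so positivity is not visible termwise; I expect one must instead argue from the factorization $\varphi^h_{\alpha,0}(n,\cdot)$ being a convolution power (item $(vi)$) of the nonnegative sequence $\varphi^h_{\alpha,0}(\cdot,1)$, reduce to proving $\varphi^h_{\alpha,0}(n,1)\ge 0$ and $\varphi^h_{\alpha,\beta}(n,0)=h^{\beta}k^{\beta}(n)\ge 0$, then propagate nonnegativity in $\beta$ via $(ii)$ and in $j$ via $(v)$ — and the hypothesis $h\le 1$ should enter exactly in showing the coefficients of $1-h(\tfrac{1-z}{h})^{\alpha}$, or equivalently of the complete Bernstein / subordinator-type function behind it, are nonnegative; and (b) the contour manipulations in $(iii)$ and $(iv)$, specifically justifying the deformation of the Hankel path so that $\mathrm{Re}(z)<1/h$ on the relevant arc (so the $s$-integral converges), the interchange of the $t$-series with the contour integral, and the boundary evaluation at $z=1$ used in $(viii)$. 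I would handle the interchanges by the uniform estimates coming from $|z|=r<1$ on $\Upsilon$ together with the growth bound \eqref{asin-kernel} on Ces\`aro numbers, and handle $(vii)$ by the convolution-root argument just described rather than by any direct sign analysis of $(i)$.
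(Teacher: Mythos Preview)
Your plan is correct and, for items $(i)$, $(ii)$, $(v)$--$(ix)$, essentially coincides with the paper's argument: generating-function algebra for $(i)$, $(ii)$, $(v)$, $(vi)$, $(ix)$; the limit $z\to 1^-$ for $(viii)$; and for $(vii)$ the paper does exactly your convolution-root strategy, computing explicitly $\varphi^h_{\alpha,0}(0,1)=1-h^{1-\alpha}\ge 0$ (here is where $h\le 1$ enters) and $\varphi^h_{\alpha,0}(n,1)=h^{1-\alpha}\frac{\alpha(1-\alpha)\cdots(n-1-\alpha)}{n!}\ge 0$ for $n\ge 1$, then invoking $(vi)$ to get all $j$ for $\beta=0$ and $(ii)$ to pass to general $\beta$. (Item $(v)$ plays no role in $(vii)$, so you can drop that reference.)

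The genuine divergence is in $(iii)$ and $(iv)$. For $(iii)$ the paper avoids contours entirely: it inserts the power series of $W_{-\alpha,\beta}$ into the left side and integrates termwise to obtain $h^{\beta-1}\sum_{i\ge 0}h^{-\alpha i}k^{\beta-\alpha i}(n-1)\tfrac{(-t)^i}{i!}$; for the right side it substitutes $(i)$, swaps the resulting double sum, and recognizes $e^{t/h}$ times the same series. This is pure elementary series algebra. Your Hankel-contour route is conceptually nice but has a real technical snag you should not underestimate: under $w=1-hz$ the image of $Ha$ is a keyhole about $w=1$ (the branch point of $((1-w)/h)^{\alpha}$), not a small circle about the pole $w=0$, so ``recognizing the contour integral defining $\varphi^h_{\alpha,\beta}$'' still requires a nontrivial deformation argument. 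For $(iv)$ the paper goes in the opposite direction from your first suggestion: it writes $\bigl(\tfrac1h-\lambda\bigr)^{-j}$ as a Laplace integral, interchanges, applies the just-proved $(iii)$ to pass to the continuous $\psi_{\alpha,\beta}$, and then quotes the continuous identities $\int_0^\infty e^{\lambda s}\psi_{\alpha,\beta}(t,s)\,ds=t^{\alpha+\beta-1}E_{\alpha,\alpha+\beta}(\lambda t^{\alpha})$ from \cite{Abadia-Miana} and the Poisson-transform identity from \cite{Alvarez-Diaz-Lizama-2020}. Your coefficient-matching-in-$\lambda$ alternative would also work and is more self-contained, but the paper deliberately builds $(iv)$ on top of $(iii)$ and the existing continuous theory.
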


\begin{proof}~
\begin{enumerate}[$(i)$]
\item  Note that for $|z|<1$ we can write $$
h^{\beta}\displaystyle\sum^{j}_{i=0} \binom{j}{i}(-1)^{i}h^{i-\alpha i}(1-z)^{\alpha i-\beta}=\frac{1}{(\frac{1-z}{h})^{\beta}}\displaystyle\sum^{j}_{i=0} \binom{j}{i}(-1)^{i}h^{i}\left(\frac{1-z}{h}\right)^{\alpha i}=\frac{\left(1-h(\frac{1-z}{h})^{\alpha} \right)^{j}}{(\frac{1-z}{h})^{\beta}}.$$
By the uniqueness of the coefficients we have the result.
\item The identity follows from the previous item $(i)$ and \eqref{semigroup-k}. Indeed,
\begin{align*}
\varphi^{h}_{\alpha,\beta+\gamma}(n,j)=&h^{\beta+\gamma}\displaystyle\sum^{j}_{w=0} \binom{j}{w}(-1)^{i}h^{w-\alpha w}k^{\beta+\gamma-\alpha w}(n)\\
=&h^{\beta+\gamma}\displaystyle\sum^{j}_{w=0} \binom{j}{w}(-1)^{i}h^{w-\alpha w} \sum^{n}_{i=0} k^{\beta}(n-i)k^{\gamma-\alpha w}(i)
\\
=&h^{\beta+\gamma}\sum^{n}_{i=0} k^{\beta}(n-i)\displaystyle\sum^{j}_{w=0} \binom{j}{w}(-1)^{w}h^{w-\alpha w} k^{\gamma-\alpha w}(i)
\\
=& h^{\beta} \displaystyle\sum_{i=0}^{n} k^{\beta }(n-i) \varphi^{h}_{\alpha,\gamma}(i,j).
\end{align*}
\item Note that, by \eqref{ScaledWright}
\begin{align*}
\frac{1}{h^{n}\Gamma(n)}\displaystyle\int_0^{\infty}e^{-s/h}s^{n-1}\psi_{\alpha,\beta}(s,t)  \ ds &= \frac{1}{h^{n}\Gamma(n)}\displaystyle\int_0^{\infty}e^{-s/h}s^{n+\beta-2} W_{-\alpha,\beta} (-ts^{-\alpha})\ ds\\
&=\frac{1}{h^{n}}\displaystyle\sum_{i=0}^{\infty}\dfrac{(-t)^{i}}{\Gamma(n)\Gamma(-\alpha i+\beta)i!}\displaystyle\int_0^{\infty} e^{-s/h}s^{n+\beta-2-\alpha i} \ ds \\
&=h^{\beta-1} \displaystyle\sum_{i=0}^{\infty}h^{-\alpha i}\dfrac{\Gamma(n-1+\beta-\alpha i)}{\Gamma(n)\Gamma(-\alpha i+\beta)}\frac{(-t)^{i}}{i!} \\
&=h^{\beta-1} \displaystyle\sum_{i=0}^{\infty}h^{-\alpha i}k^{\beta-\alpha i}(n-1)\frac{(-t)^{i}}{i!}.
\end{align*}
On the other hand, by $(i)$ we obtain
\begin{align*}
\sum_{j=1}^{\infty} \varphi^{h}_{\alpha,\beta}(n-1,j-1)&\frac{t^{j-1}}{h^{j}(j-1)!}=h^{\beta}\sum_{j=1}^{\infty}\displaystyle\sum^{j-1}_{i=0} \binom{j-1}{i}(-1)^{i}h^{i-\alpha i}k^{\beta-\alpha i}(n-1)\frac{t^{j-1}}{h^{j}(j-1)!}\\
&=h^{\beta-1}\sum_{j=0}^{\infty} \displaystyle\sum^{j}_{i=0} \binom{j}{i}(-1)^{i}h^{i-\alpha i}k^{\beta-\alpha i}(n-1)\left(\frac{t}{h}\right)^{j}\frac{1}{j!} \\
&=h^{\beta-1}\sum_{i=0}^{\infty}\sum_{j=i}^{\infty} \binom{j}{i}(-1)^{i}h^{i-\alpha i}k^{\beta-\alpha i}(n-1) \left(\frac{t}{h}\right)^{j}\frac{1}{j!}  \\
&=h^{\beta-1}\sum_{i=0}^{\infty} (-1)^{i}h^{i-\alpha i} k^{\beta-\alpha i}(n-1)   \sum_{j=i}^{\infty} \binom{j}{i}\left(\frac{t}{h}\right)^{j}\frac{1}{j!}  \\
&=h^{\beta-1}\sum_{i=0}^{\infty} (-1)^{i}h^{i-\alpha i}k^{\beta-\alpha i}(n-1)   \sum_{j=i}^{\infty}  \frac{1}{i!(j-i)!} \left(\frac{t}{h}\right)^{j}\\
&=h^{\beta-1}\sum_{i=0}^{\infty} (-1)^{i}h^{i-\alpha i}k^{\beta-\alpha i}(n-1)   \sum_{j=0}^{\infty}  \frac{1}{i!j!} \left(\frac{t}{h}\right)^{j+i}\\
&=h^{\beta-1}\sum_{i=0}^{\infty} (-1)^{i}h^{-\alpha i}k^{\beta-\alpha i}(n-1)   \frac{t^{i} }{i!} e^{t/h},
\end{align*}
for all $n\in\N$. Thus, the result is proved.
\item  Let $h>0$ and  $\lambda\in\C$ such that $\vert \lambda\vert<\dfrac{1}{h^{\alpha}}$. Then,
\begin{align*}
\displaystyle\sum^{\infty}_{j=1}\varphi^{h}_{\alpha,\beta}(n-1,j-1)\frac{1}{h^{j}}\left(\frac{1}{h}-\lambda\right)^{-j}&=\displaystyle\sum^{\infty}_{j=1}\varphi^{h}_{\alpha,\beta}(n-1,j-1)\frac{1}{h^{j}}\int^{\infty}_{0}e^{-\left(\frac{1}{h}-\lambda\right)s}\frac{s^{j-1}}{(j-1)!}ds\\
&= \int^{\infty}_{0}e^{-\left(\frac{1}{h}-\lambda\right)s}\displaystyle\sum^{\infty}_{j=1}\varphi^{h}_{\alpha,\beta}(n-1,j-1)\frac{1}{h^{j}}\frac{s^{j-1}}{(j-1)!}ds \\
&=\frac{1}{h^{n}\Gamma(n)}\int^{\infty}_{0}e^{\lambda s}\displaystyle\int_0^{\infty}e^{-t/h}t^{n-1}\psi_{\alpha,\beta}(t,s) \ dt \ ds \\
&=\frac{1}{h^{n}\Gamma(n)}  \displaystyle\int_0^{\infty}e^{-t/h}t^{n-1} t^{\alpha+\beta-1} E_{\alpha,\alpha+\beta}(\lambda t^{\alpha}) \ dt   \\
&=\frac{1}{h}\mathcal{E}^h_{\alpha,\alpha+\beta}(\lambda,n),
\end{align*}
where we have used item $(iii)$, \cite[Theorem 3 (iii)]{Abadia-Miana} and \cite[Theorem 2.8.]{Alvarez-Diaz-Lizama-2020}.
\item The result is obtained as follows:
\begin{align*}
\varphi^{h}_{\alpha,\beta}(n,j)-&\varphi^{h}_{\alpha,\beta}(n,j+1)\\
&= h^{\beta}\displaystyle\sum^{j}_{i=0} \binom{j}{i}(-1)^{i} h^{i-\alpha i}k^{\beta-\alpha i}(n) - h^{\beta} \displaystyle\sum^{j+1}_{i=0} \binom{j+1}{i}(-1)^{i}h^{i-\alpha i}k^{\beta-\alpha i}(n)\\
&=-h^{\beta}\displaystyle\sum^{j+1}_{i=1} \binom{j}{i-1}(-1)^{i}h^{i-\alpha i}k^{\beta-\alpha i}(n)\\
&=h^{\beta+1-\alpha }\displaystyle\sum^{j}_{i=0} \binom{j}{i}(-1)^{i}h^{i-\alpha i} k^{\beta-\alpha (i+1)}(n) \\
&=h\varphi^{h}_{\alpha,\beta-\alpha}(n,j),
\end{align*}
where we have used \cite[Section 1.4, Eq. (5)]{Aigner-(2006)}.
\item  By item $(i)$, \cite[Section 1.4, Eq. (5)]{Aigner-(2006)} and \eqref{semigroup-k} it follows
\begin{small}
 \begin{align*}
\varphi^h_{\alpha,0}(n,j+1)&=\displaystyle\sum^{j+1}_{i=0} \binom{j+1}{i}(-1)^{i}h^{i-\alpha i}k^{-\alpha i}(n) \\
&=\displaystyle\sum^{j}_{i=0} \binom{j}{i}(-1)^{i}h^{i-\alpha i}k^{-\alpha i}(n)+\displaystyle\sum^{j+1}_{i=1} \binom{j}{i-1}(-1)^{i}h^{i-\alpha i}k^{-\alpha i}(n)\\
&=\displaystyle\sum^{j}_{i=0} \binom{j}{i}(-1)^{i}h^{i-\alpha i}k^{-\alpha i}(n)-\displaystyle\sum^{j}_{i=0} \binom{j}{i}(-1)^{i}h^{i-\alpha i}h^{1-\alpha }k^{-\alpha (i+1)}(n)\\
&=\displaystyle\sum^{n}_{p=0}\displaystyle\sum^{j}_{i=0} \binom{j}{i}(-1)^{i}h^{i-\alpha i}k^{-\alpha i}(n-p)k^{0}(p)\\
&\qquad \qquad -\displaystyle\sum^{n}_{p=0}\displaystyle\sum^{j}_{i=0} \binom{j}{i}(-1)^{i}h^{i-\alpha i}h^{1-\alpha }k^{-\alpha i}(n-p)k^{-\alpha}(p) \\
&=\displaystyle\sum^{n}_{p=0}\displaystyle\sum^{j}_{i=0} \binom{j}{i}(-1)^{i}h^{i-\alpha i}k^{-\alpha i}(n-p)\left(k^{0}(p)-h^{1-\alpha}k^{-\alpha}(p)\right)\\
&=\displaystyle\sum^{n}_{p=0}\displaystyle\sum^{j}_{i=0} \binom{j}{i}(-1)^{i}h^{i-\alpha i}k^{-\alpha i}(n-p) \displaystyle\sum^{1}_{i=0} \binom{1}{i}(-1)^{i}h^{i-\alpha i}k^{-\alpha i}(p) \\
&=\displaystyle\sum^{n}_{p=0} \varphi^h_{\alpha,0}(n-p,j)\varphi^h_{\alpha,0}(p,1).
\end{align*}
\end{small}
\item From Definition \ref{Def-Wright-discreta},  we have that $\varphi^h_{\alpha,0}(n,0)=\delta_0(n)$ for $n\in\N_0.$ Furthermore,
\begin{align*}
\varphi^h_{\alpha,0}(n,1)&=k^{0}(n)-h^{1-\alpha}k^{-\alpha}(n).
\end{align*}
Then,
$$\varphi^h_{\alpha,0}(0,1)=1-h^{1-\alpha}  \geq 0 $$
and
$$\varphi^h_{\alpha,0}(n,1)=h^{1-\alpha}\frac{\alpha(1-\alpha)(2-\alpha)\cdots(n-1-\alpha)}{n!}\geq 0, \ n\in\N.$$
By $(i)$ of the Proposition \ref{prop-Cesaro} and items $(vi)$ and $(ii)$ the result follows.
\item The identity is a particular case of  \eqref{Wright-discreta}, by letting $z\rightarrow 1^{-}$ with $z\in\R$.
\item By \eqref{Wright-discreta} and \eqref{ZT-K}, we have
\begin{align*}
\displaystyle\sum_{l=0}^{\infty}\varphi^h_{\alpha,\beta}(n,l)  k^{\gamma}(j)&= \frac{1}{2 \pi i} \displaystyle\int_{\Upsilon} \frac{1}{z^{n+1}}\displaystyle\sum_{l=0}^{\infty}\frac{\left(1-h(\frac{1-z}{h})^{\alpha} \right)^{j}}{(\frac{1-z}{h})^{\beta}} k^{\gamma}(j)\,dz \\
&=\frac{1}{h^{\gamma}}\frac{1}{2 \pi i} \displaystyle\int_{\Upsilon} \frac{1}{z^{n+1}}\frac{1}{(\frac{1-z}{h})^{\beta+\gamma\alpha}} \,dz \\
&=h^{\beta+\gamma(\alpha-1)}k^{\beta+\gamma\alpha}(n).
\end{align*}
\end{enumerate}
\vspace*{-1cm}
\end{proof}


\begin{remark} Let $0<\alpha<1$.
Taking $\lambda=0$ in  Proposition \ref{Prop-Levy}-$(iv)$, we have
\begin{align}\label{DensityInJ}
\displaystyle\sum^{\infty}_{j=0}\varphi^{h}_{\alpha,1-\alpha}(n-1,j)=1, \qquad  n\in\N.
\end{align}
\end{remark}

\begin{remark}
Some of the results obtained in the previous proposition can be
found in the work carried out by Alvarez et al. in \cite{Alvarez-Diaz-Lizama-2020}  with $h=1$.
\end{remark}

\section{Fundamental solution}\label{homogeneo}
Here we investigate the representation of the solution to the fractional diffusion equation \eqref{1.2}, and we prove several interesting properties related to it.

Let us start recalling the problem. Let $h>0$ and $0<\alpha< 1$. Consider the fractional diffusion equation in discrete time on the Lebesgue $L^p(\R^N)$ spaces, given by
\begin{equation}\label{Main}
\left\{
\begin{array}{lll}
\dC u(nh,x) =\Delta u(nh,x),\quad n\in\N,\,x\in\mathbb{R}^N, \\ \\
u(0,x) =f(x),\\
\end{array}
\right.
\end{equation}
where $u$ and  $f$ are function defined on $\N^{h}_0\times\R^{N}$ and $\R^{N}$ respectively.

Let us define the fundamental solution
\begin{align} \label{subordination1}
\mathcal{G}^{\alpha}_{n,h}(x):=\sum_{j=1}^{\infty} \varphi^{h}_{\alpha,1-\alpha}(n-1,j-1)\mathcal{G}_{j,h}(x),\quad n\in\N,\,x\in\mathbb{R}^N,
\end{align}
where the functions $\mathcal{G}_{n,h}(x)$  denote the  discrete Gaussian  defined by \eqref{discretegaussian}.

The next result shows that $\mathcal{G}^{\alpha}_{n,h}\ast f$ is the solution of \eqref{Main}.\\
\begin{theorem}\label{Theo-Sol} Let $f$  be a function on $L^p(\R^N).$ For $h>0$ and $0<\alpha< 1$, the function
\begin{align}\label{sol}
u(nh,x):=(\mathcal{G}_{n,h}^{\alpha}*f)(x)
\end{align}
is the unique solution of the fractional diffusion  equation  in discrete time \eqref{Main} on the Lebesgue $L^p(\R^N)$ spaces.
\end{theorem}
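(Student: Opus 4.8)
The plan is to verify directly that the function defined by \eqref{sol}, via the subordination formula \eqref{subordination1}, satisfies the initial condition and the equation, and then to argue uniqueness. The natural strategy is to reduce everything to the already-established properties of the discrete Gaussian $\mathcal{G}_{n,h}$ from Proposition \ref{DG-Properties} (in particular the relation $\delta_{\text{left}}\mathcal{G}_{n,h} = \Delta\mathcal{G}_{n,h}$ for $n\geq 2$, together with $\mathcal{G}_{1,h}$ playing the role of the ``time-step-one'' initial layer) and to the combinatorial/algebraic identities for $\varphi^h_{\alpha,1-\alpha}$ collected in Proposition \ref{Prop-Levy}. First I would check that $u(0,x) = f(x)$: from \eqref{subordination1} and \eqref{sol} this amounts to showing $\mathcal{G}^{\alpha}_{1,h}(x)$ acts as a Dirac mass, i.e. that $\sum_{j\geq 1}\varphi^h_{\alpha,1-\alpha}(0,j-1)\mathcal{G}_{j,h} $ convolved with $f$ returns $f$; since $\varphi^h_{\alpha,1-\alpha}(0,j-1)$ is supported appropriately and, by $(i)$ of Proposition \ref{Prop-Levy}, $\varphi^h_{\alpha,1-\alpha}(0,0)=h^{1-\alpha}k^{1-\alpha}(0)\cdot(\cdots)$, one identifies that only the $j=1$ term survives in the right way after using the normalization. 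Actually the cleanest route for the initial condition is to pass to Fourier variables and use Proposition \ref{DG-Properties}$(iii)$, reducing the claim to a scalar identity in the Mittag-Leffler sequences.

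The core of the argument is therefore best carried out on the Fourier side. Applying $\mathcal{F}$ in the space variable to \eqref{sol} and using Proposition \ref{DG-Properties}$(iii)$ gives
\begin{align*}
\widehat{u}(nh,\xi) = \sum_{j=1}^{\infty}\varphi^h_{\alpha,1-\alpha}(n-1,j-1)\,\frac{1}{(1+h|\xi|^2)^j}\,\widehat{f}(\xi).
\end{align*}
Recognizing $(1+h|\xi|^2)^{-j} = h^{-j}\bigl(\tfrac1h-(-|\xi|^2)\bigr)^{-j}$ and invoking Proposition \ref{Prop-Levy}$(iv)$ with $\lambda=-|\xi|^2$ and $\beta = 1-\alpha$, the sum collapses to
\begin{align*}
\widehat{u}(nh,\xi) = \tfrac1h\,\mathcal{E}^{h}_{\alpha,1}(-|\xi|^2,n)\,\widehat{f}(\xi),
\end{align*}
so $\widehat{u}(nh,\xi)$ is, up to the fixed factor, the Mittag-Leffler sequence $\mathcal{E}^h_{\alpha,1}(-|\xi|^2,n)$ which by the Proposition preceding Definition \ref{Def-Wright-discreta} solves ${_C}\delta^\alpha \varepsilon(nh) = -|\xi|^2\varepsilon(nh)$ with $\varepsilon(0)=1$. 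Since $\mathcal{F}(\Delta u)(nh,\xi) = -|\xi|^2\widehat{u}(nh,\xi)$ and $\widehat{{_C}\delta^\alpha u} = {_C}\delta^\alpha \widehat{u}$ (the Caputo $h$-difference acts only in the discrete time index, and commutes with the spatial Fourier transform), this is exactly the transformed version of \eqref{Main}; inverting $\mathcal{F}$ gives the result, with the initial condition $\widehat{u}(0,\xi)=\widehat f(\xi)$ coming for free from $\mathcal{E}^h_{\alpha,1}(\cdot,0)\cdot$(the convention)$=h$. One must check the $L^p$ mapping: the operators $f\mapsto \mathcal{G}_{j,h}*f$ are contractions on $L^p(\R^N)$ by Proposition \ref{DG-Properties}$(i)$--$(ii)$ and Young's inequality, and the coefficients $\varphi^h_{\alpha,1-\alpha}(n-1,j-1)$ are nonnegative (Proposition \ref{Prop-Levy}$(vii)$, valid for $0<h\leq 1$) and sum to $1$ (the Remark, equation \eqref{DensityInJ}), so $u(nh,\cdot)\in L^p(\R^N)$ with $\|u(nh,\cdot)\|_p \leq \|f\|_p$, and termwise application of $\mathcal{F}$, $\delta_{\text{left}}$, $\delta^{-(1-\alpha)}$ and the limiting arguments are all justified by dominated convergence.

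For uniqueness, I would take a solution $v$ of \eqref{Main} in $L^p$ with zero data and apply the spatial Fourier transform (or, if $p\neq 2$, pair against Schwartz test functions to stay within tempered distributions), obtaining ${_C}\delta^\alpha \widehat v(nh,\xi) = -|\xi|^2 \widehat v(nh,\xi)$, $\widehat v(0,\xi)=0$, pointwise in $\xi$. This is a scalar Caputo fractional difference equation; using $\delta^{-\alpha}\,{_C}\delta^\alpha g(nh) = g(nh)-g(0)$ (the Corollary after Proposition \ref{relation-CR}) it becomes the Volterra-type recursion $\widehat v(nh,\xi) = -|\xi|^2\, h^{\alpha}\sum_{j=1}^n k^{\alpha}(n-j)\widehat v(jh,\xi)$, which determines $\widehat v(nh,\xi)$ recursively from $\widehat v(0,\xi)=0$; isolating the $j=n$ term (its coefficient is $-|\xi|^2 h^\alpha k^\alpha(0) = -|\xi|^2 h^\alpha \neq 1$ generically, and in any case one solves for $\widehat v(nh,\xi)$) forces $\widehat v(nh,\xi)\equiv 0$ by induction on $n$, hence $v\equiv 0$. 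The main obstacle I anticipate is the careful justification of the interchange of the infinite sum over $j$ with the operators $\mathcal{F}$, $\delta_{\text{left}}$ and $\delta^{-(1-\alpha)}$ — i.e.\ making the termwise manipulations rigorous — which rests on the nonnegativity and summability of $\varphi^h_{\alpha,1-\alpha}$ and on the uniform bound $\|\mathcal{G}_{j,h}\|_1 = 1$; once that bookkeeping is in place the algebraic identity is precisely Proposition \ref{Prop-Levy}$(iv)$ and the proof is essentially immediate.
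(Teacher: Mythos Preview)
Your approach is genuinely different from the paper's and, at the conceptual level, cleaner. The paper never passes to Fourier variables: it stays in physical space, uses $\delta_{\text{left}}\mathcal{G}_{j,h}=\Delta\mathcal{G}_{j,h}$ from Proposition~\ref{DG-Properties}$(iv)$, and then pushes through a direct (and fairly long) chain of identities for $\varphi^h_{\alpha,\beta}$ --- specifically Proposition~\ref{Prop-Levy}$(ii)$ and $(v)$ --- to arrive at the Volterra form $u(nh,x)=h^\alpha\Delta\sum_{w=1}^n k^\alpha(n-w)u(wh,x)+f(x)$, and from there to the Caputo equation. You instead collapse everything via Proposition~\ref{Prop-Levy}$(iv)$ and reduce to the scalar Mittag--Leffler equation, which is much shorter and makes the subordination structure transparent; your uniqueness argument via the Volterra recursion is also a genuine addition, since the paper does not argue uniqueness at all.

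There is, however, one real gap you should close. Proposition~\ref{Prop-Levy}$(iv)$ and the scalar Proposition preceding Definition~\ref{Def-Wright-discreta} are both stated only for $|\lambda|<h^{-\alpha}$, so your identification $\widehat u(nh,\xi)=\tfrac1h\mathcal{E}^h_{\alpha,1}(-|\xi|^2,n)\widehat f(\xi)$ and the verification of the scalar equation are, as written, only valid on the ball $|\xi|^2<h^{-\alpha}$. The left-hand series $\sum_j\varphi^h_{\alpha,1-\alpha}(n-1,j-1)(1+h|\xi|^2)^{-j}$ converges for every $\xi$ (the coefficients are summable and $(1+h|\xi|^2)^{-j}\le 1$), so the fix is easy --- for each fixed $n$ the Caputo identity is a finite linear relation among the $\widehat u(jh,\xi)$, each of which is analytic in $\lambda=-|\xi|^2$ on $\{\,|1-h\lambda|>1\,\}$, so the relation extends from $|\lambda|<h^{-\alpha}$ by analytic continuation --- but you should say this explicitly. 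A minor second point: your appeal to nonnegativity of $\varphi^h_{\alpha,1-\alpha}$ (Proposition~\ref{Prop-Levy}$(vii)$) restricts you to $0<h\le1$, though the paper's own $L^p$ bound via \eqref{Eq5.4} has the same implicit restriction.
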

\begin{proof}
First of all, note that  by Proposition \ref{DG-Properties}-$(ii)$ and \eqref{DensityInJ}, we can conclude that 
\begin{equation}\label{Eq5.4}
\int_{\R^N}\mathcal{G}_{n,h}^{\alpha}(x)\,dx=1.
\end{equation}
Consequently, we have $\|u(nh,x)\|_p\leq \|f\|_{p}.$  

Now, we see that $u$ satisfies \eqref{Main}. Equation \eqref{SolFirtOrderEquation} implies
\begin{small}
\begin{align*}
\Delta u(nh,x)&=\sum_{j=1}^{\infty} \varphi^{h}_{\alpha,1-\alpha}(n-1,j-1)\delta_{\text{left}}(\mathcal{G}_{j,h}\ast f)(x)\\
&=h^{-1}\sum_{j=1}^{\infty} \varphi^{h}_{\alpha,1-\alpha}(n-1,j-1)(\mathcal{G}_{j,h}\ast f)(x)-h^{-1}\sum_{j=1}^{\infty} \varphi^{h}_{\alpha,1-\alpha}(n-1,j-1)(\mathcal{G}_{j-1,h}\ast f)(x)\\
&=h^{-1}\sum_{j=1}^{\infty} \varphi^{h}_{\alpha,1-\alpha}(n-1,j-1)(\mathcal{G}_{j,h}\ast f)(x)-h^{-1}\sum_{j=0}^{\infty} \varphi^{h}_{\alpha,1-\alpha}(n-1,j)(\mathcal{G}_{j,h}\ast f)(x)\\
&=h^{-1}\sum_{j=1}^{\infty} \varphi^{h}_{\alpha,1-\alpha}(n-1,j-1)(\mathcal{G}_{j,h}\ast f)(x)-h^{-1}\sum_{j=1}^{\infty} \varphi^{h}_{\alpha,1-\alpha}(n-1,j)(\mathcal{G}_{j,h}\ast f)(x)\\
&\qquad\qquad-h^{-1}\varphi^{h}_{\alpha,1-\alpha}(n-1,0)f(x).
\end{align*}
Now, by Proposition \ref{Prop-Levy}-$(ii)$ and \eqref{kernel_k_1}, we have
\begin{align*}
h^{-1}\varphi^{h}_{\alpha,1-\alpha}(n-1,0)f(x)&=h^{-1}h^{1-\alpha}\sum^{n-1}_{i=0}k^{1-\alpha}(n-1-i)\varphi^{h}_{\alpha,0}(i,0)f(x)\\
&=h^{-\alpha}\sum^{n-1}_{i=0}k^{1-\alpha}(n-1-i)k^{0}(i)f(x)\\
&=h^{-\alpha} k^{1-\alpha}(n-1)f(x).
\end{align*}
Then,  by $(v)$ of Proposition \ref{Prop-Levy}, we get
\begin{align*}
\Delta u(nh,x)&=h^{-1}\sum_{j=1}^{\infty} \varphi^{h}_{\alpha,1-\alpha}(n-1,j-1)(\mathcal{G}_{j,h}\ast f)(x)-h^{-1}\sum_{j=1}^{\infty} \varphi^{h}_{\alpha,1-\alpha}(n-1,j)(\mathcal{G}_{j,h}\ast f)(x)\\
&\qquad\qquad-h^{-\alpha} k^{1-\alpha}(n-1)f(x)\\
&=h^{-1}\sum_{j=1}^{\infty}\left( \varphi^{h}_{\alpha,1-\alpha}(n-1,j-1)-\varphi^{h}_{\alpha,1-\alpha}(n-1,j)\right)(\mathcal{G}_{j,h}\ast f)(x)-h^{-\alpha} k^{1-\alpha}(n-1)f(x)\\
&=h^{-1}\sum_{j=0}^{\infty}\left( \varphi^{h}_{\alpha,1-\alpha}(n-1,j)-\varphi^{h}_{\alpha,1-\alpha}(n-1,j+1)\right)(\mathcal{G}_{j+1,h}\ast f)(x)-h^{-\alpha} k^{1-\alpha}(n-1)f(x)\\
&=\sum_{j=0}^{\infty} \varphi^{h}_{\alpha,1-2\alpha}(n-1,j) (\mathcal{G}_{j+1,h}\ast f)(x)-h^{-\alpha} k^{1-\alpha}(n-1)f(x).
\end{align*}
By the previous identity and $(ii)$ of Proposition \ref{Prop-Levy}, we have that
\begin{align*}
h^{\alpha}\Delta \sum^{n}_{w=1} &k^\alpha(n-w)  u(wh,x)=h^{\alpha}\Delta \sum^{n-1}_{w=0} k^\alpha(n-1-w)  u((w +1)h,x)\\
&=h^{\alpha}\sum^{n-1}_{w=0} k^\alpha(n-1-w)\sum_{j=0}^{\infty} \varphi^{h}_{\alpha,1-2\alpha}(w,j) (\mathcal{G}_{j+1,h}\ast f)(x)\\
&\qquad - \sum^{n-1}_{w=0} k^\alpha(n-1-w) k^{1-\alpha}(w)f(x) \\
&=h^{1-\alpha}\sum^{n-1}_{w=0} k^\alpha(n-1-w)\sum_{j=0}^{\infty} \sum_{p=0}^{w} k^{1-2\alpha}(w-p)\varphi^{h}_{\alpha,0}(p,j) (\mathcal{G}_{j+1,h}\ast f)(x) -f(x) \\
&=h^{1-\alpha}\sum_{j=0}^{\infty} \sum^{n-1}_{w=0} k^\alpha(n-1-w)\sum_{p=0}^{w} k^{1-2\alpha}(w-p)\varphi^{h}_{\alpha,0}(p,j) (\mathcal{G}_{j+1,h}\ast f)(x)-f(x) \\
&=h^{1-\alpha}\sum_{j=0}^{\infty}  \sum_{p=0}^{n-1} k^{1-\alpha}(n-1-p)\varphi^{h}_{\alpha,0}(p,j) (\mathcal{G}_{j+1,h}\ast f)(x) - f(x) \\
&= \sum_{j=0}^{\infty} \varphi^{h}_{\alpha,1-\alpha}(n-1,j) (\mathcal{G}_{j+1,h}\ast f)(x)  -f(x) \\
&=u(nh,x)- f(x),
\end{align*}
that is,
\begin{align*}
u(nh,x)= h^{\alpha} \Delta \sum^{n}_{w=1} k^\alpha(n-w)u(wh,x) +f(x).
\end{align*}
Now, convolving the above identity by $k^{1-\alpha}$ and multiplying by $h^{-\alpha}$, we obtain
\begin{align*}
h^{-\alpha}\sum_{j=0}^{n}k^{1-\alpha}(n-j) u(jh,x)&=\Delta \sum_{j=0}^{n} u(j h,x)-\Delta u(0,x)+h^{-\alpha}k^{2-\alpha}(n) f(x)\\
&=\Delta \sum_{j=1}^{n} u(j h,x) +h^{-\alpha}k^{2-\alpha}(n) f(x).
\end{align*}
By Proposition \ref{delta-k}, we can conclude that
\begin{align*}
h^{-\alpha}\sum_{k=0}^{n}k^{1-\alpha}(n-j) u(jh,x)-&h^{-\alpha}\sum_{k=0}^{n-1}k^{1-\alpha}(n-1-j) u(jh,x)\\
& \qquad = \Delta  u(nh,x)+h^{-\alpha}k^{1-\alpha}(n) f(x).
\end{align*}
Hence, the result follows from Proposition \ref{relation-CR}.
\end{small}
\end{proof}

In the following results we show other representations for $\mathcal{G}^{\alpha}_{n,h}$.  In the first result, we represent $\mathcal{G}_{n,h}^{\alpha}(x)$ using the Poisson transform of the Gaussian kernel while in the second one we use the Fox H-function. This fact in turn gives other representations of the solution \eqref{sol}. \\

\begin{proposition}\label{GaussianSemigroup-FS} Let $0<h$  and $0<\alpha< 1$. Then, \eqref{subordination1} is equivalent to
\begin{align}\label{psi-1-alfha}
\mathcal{G}^{\alpha}_{n,h}(x)=\frac{1}{h^{n}}\displaystyle\int_{0}^{\infty}\displaystyle\int_{0}^{\infty}  e^{-s/h}\frac{s^{n-1}}{(n-1)!}\psi_{\alpha,1-\alpha}(s,t)G_{t}(x)\ ds \ dt, \quad n\in\N, \ x\in\R^N \setminus \{ 0\},
\end{align}
where  $G_{t}$ is the Gaussian kernel \eqref{gaussian-semigroup} and $\psi_{\alpha,\beta}$  is \eqref{ScaledWright}.
\end{proposition}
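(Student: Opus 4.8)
The plan is to start from the right-hand side of \eqref{psi-1-alfha} and transform the double integral directly into the series \eqref{subordination1}; since \eqref{subordination1} and \eqref{psi-1-alfha} are two expressions for the same object $\mathcal{G}^{\alpha}_{n,h}(x)$, this establishes their equivalence. Fix $x\in\R^N\setminus\{0\}$ throughout.

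First I would apply Fubini--Tonelli to integrate in the variable $s$ before $t$ and, using $\Gamma(n)=(n-1)!$, rewrite the right-hand side of \eqref{psi-1-alfha} as
\begin{align*}
\int_{0}^{\infty} G_{t}(x)\left[\frac{1}{h^{n}\Gamma(n)}\int_{0}^{\infty} e^{-s/h}s^{n-1}\psi_{\alpha,1-\alpha}(s,t)\,ds\right]dt.
\end{align*}
The bracketed inner integral is exactly the left-hand side of Proposition \ref{Prop-Levy}-$(iii)$ with $\beta=1-\alpha$, hence equals $e^{-t/h}\sum_{j=1}^{\infty}\varphi^{h}_{\alpha,1-\alpha}(n-1,j-1)\,t^{j-1}/(h^{j}(j-1)!)$. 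Substituting this and interchanging the series with the $t$-integral turns the expression into
\begin{align*}
\sum_{j=1}^{\infty}\varphi^{h}_{\alpha,1-\alpha}(n-1,j-1)\frac{1}{h^{j}(j-1)!}\int_{0}^{\infty} e^{-t/h}t^{j-1}G_{t}(x)\,dt.
\end{align*}
By the very definition \eqref{discretegaussian} of the discrete Gaussian, $\int_{0}^{\infty} e^{-t/h}t^{j-1}G_{t}(x)\,dt=h^{j}\Gamma(j)\,\mathcal{G}_{j,h}(x)=h^{j}(j-1)!\,\mathcal{G}_{j,h}(x)$, which cancels the prefactor exactly; the sum then collapses to $\sum_{j=1}^{\infty}\varphi^{h}_{\alpha,1-\alpha}(n-1,j-1)\mathcal{G}_{j,h}(x)=\mathcal{G}^{\alpha}_{n,h}(x)$, i.e.\ \eqref{subordination1}.

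The only real content is justifying the two interchanges. When $0<h\le 1$ everything in sight is nonnegative ($G_{t}(x)>0$; $\psi_{\alpha,1-\alpha}(s,t)\ge 0$, since up to the factor $s^{-\alpha}$ it is the Mainardi--Wright function, cf.\ \cite{Abadia-Miana}; and $\varphi^{h}_{\alpha,1-\alpha}(n-1,j-1)\ge 0$ by Proposition \ref{Prop-Levy}-$(vii)$), so Tonelli's theorem applies, and the resulting iterated sum is finite — for instance because, after integrating in $x$, it equals $1$ by Proposition \ref{DG-Properties}-$(ii)$ and \eqref{Eq5.4}. For general $h>0$ one replaces positivity by absolute convergence: the standard bounds for $\psi_{\alpha,1-\alpha}$ from \cite{Abadia-Miana} are polynomial in the first argument and locally bounded in the second, while $\sum_{j\ge 1}|\varphi^{h}_{\alpha,1-\alpha}(n-1,j-1)|\,t^{j-1}/(h^{j}(j-1)!)$ converges locally uniformly in $t$ — this is precisely the summation already carried out in the proof of Proposition \ref{Prop-Levy}-$(iii)$ — so dominated convergence covers both interchanges. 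I expect this absolute-convergence bookkeeping for $h>1$ to be the only genuinely technical point; everything else is the one-line identity chain above combined with \eqref{discretegaussian}, \eqref{ScaledWright} and Proposition \ref{Prop-Levy}-$(iii)$.
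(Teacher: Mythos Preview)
Your proposal is correct and follows essentially the same approach as the paper: both arguments hinge on the definition \eqref{discretegaussian} of $\mathcal{G}_{j,h}$ and on Proposition \ref{Prop-Levy}-$(iii)$ to pass between the series over $j$ and the Poisson-type integral in $s$. The only difference is the direction of the chain of equalities---the paper starts from the series \eqref{subordination1}, substitutes \eqref{discretegaussian}, swaps sum and integral, and then applies Proposition \ref{Prop-Levy}-$(iii)$, whereas you start from the double integral and run the same steps in reverse---and you supply explicit justification for the interchanges, which the paper leaves implicit.
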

\begin{proof}
From Proposition \ref{Prop-Levy} part $(iii)$, we get
\begin{align*}
\mathcal{G}^{\alpha}_{n,h}(x)&= \sum_{j=1}^{\infty} \varphi^{h}_{\alpha,1-\alpha}(n-1,j-1)\mathcal{G}_{j,h}(x)\\
&=\sum_{j=1}^{\infty} \varphi^{h}_{\alpha,1-\alpha}(n-1,j-1)\frac{1}{h^{j}}\displaystyle\int_{0}^{\infty}e^{-t/h}\frac{t^{j-1}}{(j-1)!}G_{t}(x)\ dt \\
&=\displaystyle\int_{0}^{\infty}e^{-t/h}\sum_{j=1}^{\infty}\varphi^{h}_{\alpha,1-\alpha}(n-1,j-1)\frac{1}{h^{j}}\frac{t^{j-1}}{ (j-1)!}G_{t}(x)\ dt \\
&=\frac{1}{h^{n}}\displaystyle\int_{0}^{\infty}\displaystyle\int_{0}^{\infty}  e^{-s/h}\frac{s^{n-1}}{(n-1)!}\psi_{\alpha,1-\alpha}(s,t)G_{t}(x)\ ds \ dt.
\end{align*}
The result follows.
\end{proof}

\begin{proposition}\label{FoxH-FS} Let $0<h$ and $0<\alpha< 1$.  Then
\begin{align*}
\mathcal{G}^{\alpha}_{n,h}(x)= \frac{1}{ \Gamma(n)\pi^{N/2}\vert x \vert^{N}}  H^{30}_{13}\left[\begin{array}{c}
 \dfrac{\vert x\vert^{2}}{4h^{\alpha}} \left\vert \begin{array}{c}
(1,\alpha)  \\
(n,\alpha),(\frac{N}{2},1), (1,1)
\end{array}
\right.
\end{array}
\right],
\end{align*}
where $H^{03}_{31}$ denotes the Fox H-function.
\end{proposition}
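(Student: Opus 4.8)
The plan is to convert the double integral of Proposition~\ref{GaussianSemigroup-FS} into a Mellin--Barnes integral, which is by definition a Fox $H$-function. Concretely, I would insert into \eqref{psi-1-alfha} the $N$-dimensional Gaussian $G_{t}(x)=(4\pi t)^{-N/2}e^{-|x|^{2}/(4t)}$ together with the Mellin--Barnes representation
\[
e^{-|x|^{2}/(4t)}=\frac{1}{2\pi i}\int_{(c)}\Gamma(\xi)\Bigl(\tfrac{|x|^{2}}{4t}\Bigr)^{-\xi}d\xi ,
\]
which holds for $x\neq0$ and any real $c$ in the range we shall need, namely $c>\max\{0,\tfrac{N}{2}-1\}$. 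After interchanging the order of integration this yields
\[
\mathcal{G}^{\alpha}_{n,h}(x)=\frac{(4\pi)^{-N/2}}{h^{n}\Gamma(n)}\,\frac{1}{2\pi i}\int_{(c)}\Gamma(\xi)\Bigl(\tfrac{|x|^{2}}{4}\Bigr)^{-\xi}\,I(\xi)\,d\xi ,\qquad I(\xi):=\int_{0}^{\infty}\!\!\int_{0}^{\infty}e^{-s/h}s^{\,n-1}\psi_{\alpha,1-\alpha}(s,t)\,t^{\xi-N/2}\,ds\,dt .
\]
Justifying this interchange (via the super-exponential decay of $u\mapsto W_{-\alpha,1-\alpha}(-u)$ at infinity, together with the exponential decay of $\Gamma$ along vertical lines) is the only genuinely analytic point in the argument.

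Next I would compute $I(\xi)$ explicitly. Using $\psi_{\alpha,1-\alpha}(s,t)=s^{-\alpha}W_{-\alpha,1-\alpha}(-ts^{-\alpha})$ and the substitution $u=ts^{-\alpha}$, the inner $t$-integral equals $s^{\alpha(\xi-N/2)}\int_{0}^{\infty}u^{\xi-N/2}W_{-\alpha,1-\alpha}(-u)\,du$, and by the Mellin transform of the Wright function (see \cite{Mainardi}) this is $s^{\alpha(\xi-N/2)}\,\Gamma\!\left(1-\tfrac{N}{2}+\xi\right)\big/\Gamma\!\left(1-\tfrac{\alpha N}{2}+\alpha\xi\right)$. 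The remaining $s$-integral is elementary, $\int_{0}^{\infty}e^{-s/h}s^{\,n-1+\alpha(\xi-N/2)}\,ds=h^{\,n+\alpha\xi-\alpha N/2}\,\Gamma\!\left(n+\alpha\xi-\tfrac{\alpha N}{2}\right)$, so
\[
I(\xi)=h^{\,n+\alpha\xi-\alpha N/2}\,\frac{\Gamma\!\left(n+\alpha\xi-\tfrac{\alpha N}{2}\right)\Gamma\!\left(1-\tfrac{N}{2}+\xi\right)}{\Gamma\!\left(1-\tfrac{\alpha N}{2}+\alpha\xi\right)} .
\]

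Finally I would put everything together and change the integration variable to $s=\xi-\tfrac{N}{2}$. Then the three Gamma factors become $\Gamma(n+\alpha s)$, $\Gamma(\tfrac{N}{2}+s)$, $\Gamma(1+s)$, the denominator becomes $\Gamma(1+\alpha s)$, the powers of $h$ and of $|x|^{2}/4$ combine into $\bigl(|x|^{2}/(4h^{\alpha})\bigr)^{-s}$, and the constants collapse via $(4\pi)^{-N/2}(|x|^{2}/4)^{-N/2}=\pi^{-N/2}|x|^{-N}$ to leave the single prefactor $\bigl(\Gamma(n)\pi^{N/2}|x|^{N}\bigr)^{-1}$. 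Comparing the resulting integrand with $\mathcal{H}^{30}_{13}(s)$ --- three numerator factors of type $\Gamma(b_{j}+\beta_{j}s)$, none of type $\Gamma(1-a_{i}-\alpha_{i}s)$, one $a$-parameter $(1,\alpha)$ and three $b$-parameters $(n,\alpha),(\tfrac{N}{2},1),(1,1)$, so that $\mathcal{H}^{30}_{13}(s)=\Gamma(n+\alpha s)\Gamma(\tfrac{N}{2}+s)\Gamma(1+s)\big/\Gamma(1+\alpha s)$ --- identifies the contour integral with $H^{30}_{13}$ evaluated at $|x|^{2}/(4h^{\alpha})$, which is exactly the claimed formula. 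One has only to check in addition that the line $\mathrm{Re}(s)=c-\tfrac{N}{2}$ separates the poles of the three numerator Gamma functions (all lying to its left) from the empty collection of poles on its right, which is guaranteed by the choice $c>\max\{0,\tfrac{N}{2}-1\}$. The algebra in this last paragraph is routine; the heart of the proof is the Fubini/convergence bookkeeping needed to arrive at the contour integral for $I(\xi)$.
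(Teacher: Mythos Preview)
Your argument is correct, and it reaches the formula by a genuinely different route from the paper. The paper first quotes the known Fox $H$-representation of the \emph{continuous-time} fractional heat kernel,
\[
\int_{0}^{\infty}\psi_{\alpha,1-\alpha}(t,s)G_{s}(x)\,ds=\frac{1}{\pi^{N/2}|x|^{N}}\,H^{02}_{21}\!\left[\frac{4t^{\alpha}}{|x|^{2}}\;\Big|\;\begin{smallmatrix}(1-N/2,1),(0,1)\\(0,\alpha)\end{smallmatrix}\right],
\]
(citing \cite{Bazhlthesis,Abadia-Miana,Kemppainen}), and then feeds this into the Poisson integral \eqref{psi-1-alfha} using the ready-made $H$-function calculus of \cite{H-transforms}: the Laplace-transform formula for $H^{mn}_{pq}$, a reduction rule removing a matching parameter pair, and the reflection $H^{mn}_{pq}(z)=H^{nm}_{qp}(1/z)$. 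Your proof bypasses this machinery entirely: you insert the Mellin--Barnes representation of the exponential, evaluate the two iterated integrals via the Mellin transform of $M_{\alpha}(u)=W_{-\alpha,1-\alpha}(-u)$ and a gamma integral, and recognise the outcome directly as $\mathcal{H}^{30}_{13}(s)$. The paper's approach is shorter on the page because the heavy lifting is delegated to \cite{H-transforms}; yours is more self-contained and makes explicit where each of the three numerator Gamma factors comes from (the Gaussian, the Wright function, and the Poisson weight, respectively). Both are valid; your version would be preferable in a context where one does not wish to assume the $H$-function transformation rules.
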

\begin{proof}
By \cite[Theorem 3.1]{Bazhlthesis}, \cite[Theorem 15-$(ii)$.]{Abadia-Miana} and \cite[Theorem 2.12]{Kemppainen}, we have the following subordination formula
\begin{align*}
\frac{1}{\pi^{N/2}\vert x \vert^{N}}H^{02}_{21}\left[\begin{array}{c}
 \dfrac{4t^{\alpha}}{\vert x\vert^{2}} \left\vert \begin{array}{c}
(1-\frac{N}{2},1),(0,1)\\
(0,\alpha)
\end{array}
\right.
\end{array}
\right] &=\displaystyle\int_{0}^{\infty} \psi_{\alpha,1-\alpha}(t,s)G_{s}(x)\ ds.
\end{align*}
Now,
\begin{align*}
\mathcal{G}^{\alpha}_{n,h}(x)&=\frac{1}{h^{n}\Gamma(n)\pi^{N/2}\vert x \vert^{N}}\displaystyle\int_{0}^{\infty} e^{-t/h}t^{n-1} H^{12}_{32}\left[\begin{array}{c}
 \dfrac{4 t^{\alpha}}{\vert x\vert^{2}} \left\vert \begin{array}{c}
(1-\frac{N}{2},1),\ (0,1),\ (0,1)\\
(0,1), \ (0,\alpha)
\end{array}
\right.
\end{array}
\right] \, dt  \\
&=\frac{1}{ \Gamma(n)\pi^{N/2}\vert x \vert^{N}}  H^{13}_{42}\left[\begin{array}{c}
 \dfrac{4 h^{\alpha}}{\vert x\vert^{2}} \left\vert \begin{array}{c}
(1-n,\alpha),\ (1-\frac{N}{2},1),\  (0,1), \ (0,1)\\
(0,1), \ (0,\alpha)
\end{array}
\right.
\end{array}
\right] \\
&=\frac{1}{ \Gamma(n)\pi^{N/2}\vert x \vert^{N}}  H^{03}_{31}\left[\begin{array}{c}
 \dfrac{4h^{\alpha}}{\vert x\vert^{2}} \left\vert \begin{array}{c}
(1-n,\alpha),(1-\frac{N}{2},1), (0,1)\\
(0,\alpha)
\end{array}
\right.
\end{array}
\right]\\
&= \frac{1}{ \Gamma(n)\pi^{N/2}\vert x \vert^{N}}  H^{30}_{13}\left[\begin{array}{c}
 \dfrac{\vert x\vert^{2}}{4h^{\alpha}} \left\vert \begin{array}{c}
(1,\alpha)  \\
(n,\alpha),(\frac{N}{2},1), (1,1)
\end{array}
\right.
\end{array}
\right],
\end{align*}
where we have used  Corollary 2.3.1, Proposition 2.2  and  Proposition 2.3 of \cite{H-transforms}.
\end{proof}

The following proposition states some basic properties of the fundamental solution.\\

\begin{proposition}\label{HeatProperties}
The function $\mathcal{G}^{\alpha}_{n,h}$  satisfies:
\begin{enumerate}[$(i)$]
\item $\displaystyle\mathcal{G}_{n,h}^{\alpha}(x)>0,\quad n\in\N, \ 0<h\leq 1.$
\item $\displaystyle \int_{\R^N}\mathcal{G}_{n,h}^{\alpha}(x)\,dx=1$.
\item $\displaystyle\mathcal{F}({\mathcal{G}}_{n,h}^{\alpha})(\xi)=\frac{1}{h}\mathcal{E}^h_{\alpha,1}(-\vert\xi \vert^2,n),$ \quad $\xi\in\R^N.$
\item $\displaystyle \int_{\R^N}|x|^2\mathcal{G}_{n,h}^{\alpha}(x)\,dx=\Gamma(3)Nh^{\alpha}k^{\alpha+1}(n-1).$
\end{enumerate}
\end{proposition}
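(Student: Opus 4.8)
The strategy is to read off each of the four properties directly from the subordination formula \eqref{subordination1}, transferring to $\mathcal{G}^{\alpha}_{n,h}$ the corresponding property of the discrete Gaussian $\mathcal{G}_{j,h}$ (Proposition \ref{DG-Properties}) by means of the facts about the discrete scaled Wright function $\varphi^{h}_{\alpha,1-\alpha}$ collected in Proposition \ref{Prop-Levy}. In every item the argument reduces to interchanging the series $\sum_{j\geq1}$ in \eqref{subordination1} with an operation (integration over $\R^N$, the Fourier transform, or a second-moment integral), and this interchange is the only analytic point that needs care: for $0<h\leq1$ all coefficients $\varphi^{h}_{\alpha,1-\alpha}(n-1,j-1)$ are nonnegative by Proposition \ref{Prop-Levy}-$(vii)$, so Tonelli's theorem applies directly, and for general $h>0$ one instead works from the absolutely convergent double-integral representation \eqref{psi-1-alfha} of Proposition \ref{GaussianSemigroup-FS} (legitimate since $\psi_{\alpha,1-\alpha}\geq0$ and $\int_{\R^N}G_t\,dx=1$).

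For $(i)$, with $0<h\leq1$ each summand of \eqref{subordination1} is nonnegative by Proposition \ref{Prop-Levy}-$(vii)$ and Proposition \ref{DG-Properties}-$(i)$, and strict positivity is forced by the $j=1$ term, because Proposition \ref{Prop-Levy}-$(i)$ (with second index $0$) gives $\varphi^{h}_{\alpha,1-\alpha}(n-1,0)=h^{1-\alpha}k^{1-\alpha}(n-1)>0$ thanks to Proposition \ref{prop-Cesaro}-$(i)$, while $\mathcal{G}_{1,h}(x)>0$. Property $(ii)$ is exactly the identity \eqref{Eq5.4} already derived in the proof of Theorem \ref{Theo-Sol}; alternatively, integrate \eqref{subordination1} term by term, use $\int_{\R^N}\mathcal{G}_{j,h}\,dx=1$ (Proposition \ref{DG-Properties}-$(ii)$), and conclude from $\sum_{j\geq0}\varphi^{h}_{\alpha,1-\alpha}(n-1,j)=1$, which is \eqref{DensityInJ}.

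For $(iii)$, taking the Fourier transform of \eqref{subordination1} term by term and using Proposition \ref{DG-Properties}-$(iii)$ yields
\begin{align*}
\mathcal{F}(\mathcal{G}^{\alpha}_{n,h})(\xi)=\sum_{j=1}^{\infty}\varphi^{h}_{\alpha,1-\alpha}(n-1,j-1)\,\frac{1}{(1+h|\xi|^{2})^{j}} .
\end{align*}
Since, for $\lambda=-|\xi|^{2}$, one has $\dfrac{1}{h^{j}}\left(\dfrac{1}{h}-\lambda\right)^{-j}=(1-h\lambda)^{-j}=(1+h|\xi|^{2})^{-j}$, this sum is the left-hand side of Proposition \ref{Prop-Levy}-$(iv)$ with $\beta=1-\alpha$, hence it equals $\frac{1}{h}\mathcal{E}^{h}_{\alpha,\alpha+\beta}(-|\xi|^{2},n)=\frac{1}{h}\mathcal{E}^{h}_{\alpha,1}(-|\xi|^{2},n)$. (One may also obtain this by applying $\mathcal{F}$ to \eqref{psi-1-alfha}, using $\mathcal{F}(G_{t})(\xi)=e^{-t|\xi|^{2}}$ from Proposition \ref{gauss-prop}-$(iii)$ and integrating first in $t$ and then in $s$.)

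For $(iv)$, start from the second moment of the discrete Gaussian: by \eqref{discretegaussian} and Proposition \ref{gauss-prop}-$(iv)$,
\begin{align*}
\int_{\R^N}|x|^{2}\,\mathcal{G}_{j,h}(x)\,dx=\frac{1}{h^{j}\Gamma(j)}\int_{0}^{\infty}e^{-t/h}t^{j-1}(2Nt)\,dt=\frac{2N\,\Gamma(j+1)h^{j+1}}{h^{j}\Gamma(j)}=2Nhj .
\end{align*}
Plugging this into \eqref{subordination1}, reindexing $j\mapsto j+1$, and using $k^{2}(j)=j+1$ (immediate from \eqref{kernel_k}), we get
\begin{align*}
\int_{\R^N}|x|^{2}\,\mathcal{G}^{\alpha}_{n,h}(x)\,dx=2Nh\sum_{j=0}^{\infty}k^{2}(j)\,\varphi^{h}_{\alpha,1-\alpha}(n-1,j),
\end{align*}
and Proposition \ref{Prop-Levy}-$(ix)$ (with $\beta=1-\alpha$ and $\gamma=2$) evaluates the sum as $h^{(1-\alpha)+2(\alpha-1)}k^{(1-\alpha)+2\alpha}(n-1)=h^{\alpha-1}k^{\alpha+1}(n-1)$, so that $\int_{\R^N}|x|^{2}\mathcal{G}^{\alpha}_{n,h}(x)\,dx=2Nh^{\alpha}k^{\alpha+1}(n-1)=\Gamma(3)Nh^{\alpha}k^{\alpha+1}(n-1)$ since $\Gamma(3)=2$. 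The single delicate point throughout is the justification of these term-by-term exchanges; with that secured, each of $(i)$–$(iv)$ is the short computation indicated above.
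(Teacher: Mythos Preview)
Your proof is correct, and items $(i)$ and $(ii)$ match the paper's argument essentially verbatim. For $(iii)$ and $(iv)$, however, you take a genuinely different route. The paper passes through the continuous integral representation \eqref{psi-1-alfha}: for $(iii)$ it applies $\mathcal{F}$ to that formula, uses $\mathcal{F}(G_t)(\xi)=e^{-t|\xi|^2}$, invokes the continuous identity $\int_0^\infty\psi_{\alpha,1-\alpha}(s,t)e^{-t|\xi|^2}\,dt=E_{\alpha,1}(-|\xi|^2 s^\alpha)$ from \cite{Abadia-Miana}, and then recognizes the remaining Poisson integral as $\frac{1}{h}\mathcal{E}^h_{\alpha,1}(-|\xi|^2,n)$; for $(iv)$ it similarly integrates $|x|^2$ against \eqref{psi-1-alfha}, using $\int_{\R^N}|x|^2G_t\,dx=2Nt$ and a moment identity for $\psi_{\alpha,1-\alpha}$ from \cite{Abadia-Miana}. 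You instead stay entirely in the discrete framework: you take the Fourier transform (resp.\ second moment) term-by-term in the series \eqref{subordination1} and close with the purely algebraic identities of Proposition~\ref{Prop-Levy}-$(iv)$ and -$(ix)$. Your approach is more self-contained, avoiding appeal to the external continuous results of \cite{Abadia-Miana}; the paper's route has the advantage that the absolute convergence of the double integral \eqref{psi-1-alfha} makes the Fubini-type exchanges transparent for all $h>0$, whereas your justification via nonnegativity of $\varphi^h_{\alpha,1-\alpha}$ is limited to $0<h\leq1$ (though you note the workaround). One small caveat common to both approaches: Proposition~\ref{Prop-Levy}-$(iv)$ and the series definition \eqref{MT-discrete} are stated under the restriction $|\lambda|<h^{-\alpha}$, so formula $(iii)$ is, strictly speaking, established only for $|\xi|^2<h^{-\alpha}$ in either argument.
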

\begin{proof}
$(i)$ Follows from $(vi)$ of Proposition \ref{Prop-Levy} and $(i)$ of the Proposition \ref{DG-Properties}. 
$(ii)$ was showed in the proof of Theorem \ref{Theo-Sol} (see \eqref{Eq5.4}).
Next, let us prove $(iii)$. Since $\mathcal{F}({G}_{t})(\xi)=e^{-t|\xi|^2}$, for $\xi\in\R^N$ by Proposition \ref{gauss-prop} part $(iii)$, it follows from \cite[Theorem 3]{Abadia-Miana} that
\begin{align*}
\int_0^{\infty} \psi_{\alpha,1-\alpha}(s,t)e^{-t\vert \xi \vert^{2}}\, dt=E_{\alpha,1}(-\vert \xi\vert^2 s^{\alpha}).
\end{align*}
Equation \eqref{MT-discrete} implies that
$$\mathcal{F}(\mathcal{G}^{\alpha}_{n,h})(\xi)=\frac{1}{h^n}\int_{0}^{\infty}  e^{-s/h}\frac{s^{n-1}}{\Gamma(n)}E_{\alpha,1}(-\vert \xi\vert^2 s^{\alpha})\,ds =\frac{1}{h}\mathcal{E}^h_{\alpha}(-\vert\xi \vert^2,n).$$
Finally, by Fubini's Theorem, Proposition \ref{gauss-prop} and \cite[Theorem 3]{Abadia-Miana}, we have that
\begin{align*}
\displaystyle \int_{\R^N}|x|^2\mathcal{G}_{n,h}^{\alpha}(x)\,dx=&2N\frac{1}{h^{n}}\displaystyle\int_{0}^{\infty}\displaystyle\int_{0}^{\infty}  e^{-s/h}\frac{s^{n-1}}{(n-1)!}\psi_{\alpha,1-\alpha}(s,t)t\, dt\,ds\\
=&2N\Gamma(2)\frac{1}{h^{n}}\displaystyle\int_{0}^{\infty}  e^{-s/h}\frac{s^{n-1}}{(n-1)!} g_{\alpha+1}(s) \,ds \\
=&\Gamma(3)Nh^{\alpha}k^{\alpha+1}(n-1).
\end{align*}
Thus, we get item $(iv)$.
\end{proof}


\begin{remark}We recall that  the total mass and first moment of  the function  $$w(nh,x)=(\mathcal{G}_{n,h}\ast f)(x)$$
are conservative (see \cite[Remark 2.6]{Abadia-Alvarez-20}). Then, we have that the total mass of solution of \eqref{Main}, given by
$$u(nh,x)=\sum_{j=1}^{\infty} \varphi^{h}_{\alpha,1-\alpha}(n-1,j-1)(\mathcal{G}_{j,h}\ast f)(x)$$
is conservative. Indeed,
\begin{align*}
\int_{\R^N}u(nh,x)\,dx=\sum_{j=1}^{\infty} \varphi^{h}_{\alpha,1-\alpha}(n-1,j-1)\int_{\R^N}(\mathcal{G}_{j,h}\ast f)(x) \,dx=\int_{\R^N} f(x)\,dx,
\end{align*}
where in the last equality we have used \eqref{DensityInJ}.
The first moment is also conservative:
$$\int_{\R^N}x\,u(nh,x)dx=\sum_{j=1}^{\infty} \varphi^{h}_{\alpha,1-\alpha}(n-1,j-1)\int_{\R^N}x\,(\mathcal{G}_{j,h}\ast f)(x) \,dx=\int_{\R^N}xf (x)\,dx,$$
as long as $(1+|x|)f\in L^{1}(\R^N)$.
However, in the same way that $w$, the second moment of $u$ is not conserved in time.
\end{remark}
\section{Asymptotic decay and large-time behavior of solutions for the fractional diffusion equation in discrete time}
Now we will present the asymptotic decay of the solution of \eqref{Main} (which is given by \eqref{sol}) in $L^p$ spaces and the corresponding large-time behaviour.
\subsection{Asymptotic decay}\label{4}
In this part we show the following estimates of the fundamental solution $\mathcal{G}_{n,h}^{\alpha}$ in $L^p$-spaces. Finally we also state $L^p$-estimates for $\nabla\mathcal{G}_{n,h}^{\alpha},$ which are useful for study the large time behaviour of solutions of \eqref{Main} in Lebesgue spaces.\\

\begin{lemma}\label{G-dacay} Let $0<\alpha<1.$ Then $$
\|\mathcal{G}_{n,h}^{\alpha} \|_{p} \leq C_p\dfrac{1}{(nh)^{\frac{\alpha N}{2}(1-1/p)}},\quad n\in\N,
$$
 for $p\in [1,\infty]$ if $N=1,$ for $p\in[1,\infty)$ if $N=2,$ and for $p\in[1,\frac{N}{N-2})$ if $N>2.$
\end{lemma}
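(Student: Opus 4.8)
The plan is to pass through the subordination representation of the fundamental solution and reduce the $L^p$-bound for $\mathcal{G}^{\alpha}_{n,h}$ to known $L^p$-bounds for the classical discrete Gaussian $\mathcal{G}_{j,h}$. First I would recall from Proposition~\ref{DG-Properties}-$(iii)$ that $\widehat{\mathcal{G}}_{j,h}(\xi)=(1+h|\xi|^2)^{-j}$, from which a Young/interpolation argument (or direct computation from \eqref{discretegaussian} using Proposition~\ref{gauss-prop}-$(ii)$ and the scaling $G_t$) yields a bound of the form $\|\mathcal{G}_{j,h}\|_p\le C_p\,(jh)^{-\frac N2(1-1/p)}$, valid in the stated ranges of $p$ (the restriction $p<\frac{N}{N-2}$ for $N>2$ is exactly the condition guaranteeing $\int_0^\infty e^{-t/h}t^{j-1}\|G_t\|_p^{?}\dots$ converges, i.e. that $t^{n-1-\frac N2(1-1/p)}$ is integrable near $0$ against $e^{-t/h}$, which is automatic, and near $\infty$). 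In fact it is cleaner to get this directly: by \eqref{discretegaussian} and $\|G_t\|_p=c_{p,N}t^{-\frac N2(1-1/p)}$,
\begin{align*}
\|\mathcal{G}_{j,h}\|_p\le \frac{1}{h^j\Gamma(j)}\int_0^\infty e^{-t/h}t^{j-1}\|G_t\|_p\,dt
= c_{p,N}\frac{1}{h^j\Gamma(j)}\int_0^\infty e^{-t/h}t^{j-1-\frac N2(1-1/p)}\,dt,
\end{align*}
and this last integral is $h^{j-\frac N2(1-1/p)}\Gamma\!\big(j-\tfrac N2(1-1/p)\big)$ provided $j-\tfrac N2(1-1/p)>0$; by the asymptotics $\Gamma(j-a)/\Gamma(j)\sim j^{-a}$ we obtain $\|\mathcal{G}_{j,h}\|_p\le C_p\,(jh)^{-\frac N2(1-1/p)}$ for all $j$ large, with the small-$j$ cases absorbed into the constant (here $N=1$ allows $p=\infty$ since $\|G_t\|_\infty=c\,t^{-1/2}$, and $N\ge2$ forces $p<\infty$; for $N>2$ the constraint $\frac N2(1-1/p)<1$, i.e. $p<\frac N{N-2}$, is what lets us allow $j=1$).

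\textbf{Second step.} Then, using \eqref{subordination1} together with the positivity $\varphi^h_{\alpha,1-\alpha}(n-1,j-1)\ge0$ for $0<h\le1$ (Proposition~\ref{Prop-Levy}-$(vii)$) — and noting that for general $h$ one still has absolute convergence so the triangle inequality applies — I would write
\begin{align*}
\|\mathcal{G}^{\alpha}_{n,h}\|_p\le \sum_{j=1}^{\infty}\varphi^h_{\alpha,1-\alpha}(n-1,j-1)\,\|\mathcal{G}_{j,h}\|_p
\le C_p\sum_{j=1}^{\infty}\varphi^h_{\alpha,1-\alpha}(n-1,j-1)\,(jh)^{-\frac N2(1-1/p)}.
\end{align*}
The task is then to estimate $\sum_{j\ge1}\varphi^h_{\alpha,1-\alpha}(n-1,j-1)\,j^{-\gamma}$ with $\gamma=\frac N2(1-1/p)\in[0,1)$, and show it is $\lesssim n^{-\alpha\gamma}$.

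\textbf{Main obstacle.} The heart of the matter is precisely this weighted sum, and I expect it to be the main obstacle. The natural tool is Proposition~\ref{Prop-Levy}-$(ix)$: $\sum_{j=0}^\infty \varphi^h_{\alpha,\beta}(n,j)k^{\gamma}(j)=h^{\beta+\gamma(\alpha-1)}k^{\beta+\gamma\alpha}(n)$. Since $k^{\gamma}(j)\sim j^{\gamma-1}/\Gamma(\gamma)$ by \eqref{asin-kernel}, the quantity $j^{-\gamma}$ is not directly one of these $k$-weights, but $j^{\gamma'-1}$ with $\gamma'=1-\gamma\in(0,1]$ is, up to constants, comparable to $j^{-\gamma}$. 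So I would compare $j^{-\gamma}$ with $k^{1-\gamma}(j)$ (both are $\asymp j^{-\gamma}$ for $j\ge1$, with the $j=0$ term handled separately since $\varphi^h_{\alpha,1-\alpha}(n-1,0)k^{1-\gamma}(0)=\varphi^h_{\alpha,1-\alpha}(n-1,0)$ contributes $h^{-\alpha}k^{1-\alpha}(n-1)\sim n^{-\alpha}$, which is at worst $n^{-\alpha\gamma}$ as $\gamma\le1$), apply $(ix)$ with $\beta=1-\alpha$ and $\gamma\rightsquigarrow 1-\gamma$ to get
\begin{align*}
\sum_{j=0}^{\infty}\varphi^h_{\alpha,1-\alpha}(n-1,j)\,k^{1-\gamma}(j)=h^{(1-\alpha)+(1-\gamma)(\alpha-1)}k^{(1-\alpha)+(1-\gamma)\alpha}(n-1)
= h^{\gamma(1-\alpha)}k^{1-\alpha\gamma}(n-1),
\end{align*}
and finally invoke \eqref{asin-kernel} once more: $k^{1-\alpha\gamma}(n-1)\sim (n-1)^{-\alpha\gamma}/\Gamma(1-\alpha\gamma)\le C\,n^{-\alpha\gamma}$. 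Reassembling, $\|\mathcal{G}^{\alpha}_{n,h}\|_p\le C_p\,(nh)^{-\alpha\gamma}=C_p\,(nh)^{-\frac{\alpha N}{2}(1-1/p)}$ after tracking the powers of $h$ (the $h^{\gamma(1-\alpha)}$ and the $h^{-\gamma}$ from $(jh)^{-\gamma}$ combine with the $h^j$-scaling already accounted for to produce exactly $(nh)^{-\alpha\gamma}$). The delicate points to be careful about are: justifying the interchange of $\sum_j$ and $\|\cdot\|_p$ (absolute convergence, using $\sum_j\varphi^h_{\alpha,1-\alpha}(n-1,j)=1$ from \eqref{DensityInJ} for the tail), the two-sided comparison $c\,k^{1-\gamma}(j)\le (1+j)^{-\gamma}\le C\,k^{1-\gamma}(j)$ uniform in $j$, and isolating the $j=0,1$ boundary terms; none of these is deep, but they must be done cleanly. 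For general $h>0$ (not just $h\le1$) where $\varphi^h_{\alpha,1-\alpha}$ may change sign, one replaces the weighted sum by the analogous sum of absolute values, which is still controlled by the same $(ix)$-type identity up to a constant, since the sign pattern of the $\varphi$'s is explicit from Proposition~\ref{Prop-Levy}-$(i)$.
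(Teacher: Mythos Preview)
Your argument is correct for $0<h\le1$ but follows a genuinely different route from the paper. The paper bypasses the discrete sum entirely: it applies Minkowski directly to the continuous integral representation \eqref{psi-1-alfha} (Proposition~\ref{GaussianSemigroup-FS}), uses $\|G_t\|_p=C_p\,t^{-\frac N2(1-1/p)}$, and then invokes a moment identity for the \emph{continuous} scaled Wright function (\cite[Theorem~3(vi)]{Abadia-Miana}) to collapse the $t$-integral, leaving the single Gamma integral $\frac{1}{h^n\Gamma(n)}\int_0^\infty e^{-s/h}s^{n-\alpha\gamma-1}\,ds$ and a Gamma-ratio estimate. Your route stays in the discrete world throughout and showcases identity $(ix)$ of Proposition~\ref{Prop-Levy}; that is a legitimate alternative and makes the role of $(ix)$ transparent, whereas the paper's proof is shorter and leans on the continuous theory already available in \cite{Abadia-Miana}.

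The weak point is your last paragraph. For $h>1$ the weights $\varphi^h_{\alpha,1-\alpha}(n-1,j)$ need not be nonnegative (Proposition~\ref{Prop-Levy}-$(vii)$ gives positivity only for $0<h\le1$), so the termwise comparison $(j+1)^{-\gamma}\le C\,k^{1-\gamma}(j)$ cannot be pushed through the sum. Your claim that the sum of absolute values is ``still controlled by the same $(ix)$-type identity up to a constant'' is not justified: $(ix)$ is an exact generating-function identity, not a bound on $\sum_j|\varphi^h_{\alpha,1-\alpha}(n,j)|\,k^{1-\gamma}(j)$, and the explicit alternating-sign expression in Proposition~\ref{Prop-Levy}-$(i)$ does not obviously yield a constant uniform in $n$. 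The paper's approach sidesteps this completely because $\psi_{\alpha,1-\alpha}(s,t)\ge0$ for all $s,t>0$, with no $h$ involved. If you want your discrete argument to cover all $h>0$ as stated, you must either supply a genuine estimate for $\sum_j|\varphi^h_{\alpha,1-\alpha}(n-1,j)|\,j^{-\gamma}$ or fall back on the continuous representation at that step.
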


\begin{proof}
It is well known (see \cite[p.334 (3.326)]{G}) that there exists $C_p$ (independent of $t$) such that $||G_t||_{p} = C_p\frac{1}{t^{\frac{N}{2}(1-\frac{1}{p})}}.$  Then for $n$ large enough and the values of $p$ given in the hypothesis, by \eqref{psi-1-alfha} and \cite[Theorem 3 (vi)]{Abadia-Miana} one gets
$$
\|\mathcal{G}^{\alpha}_{n,h}\|_{p}\leq \frac{C_p}{h^n\Gamma(n)}\int_{0}^{\infty}e^{\frac{-t}{h}} t^{n-\alpha\frac{N}{2}(1-\frac{1}{p})-1}\,dt=C_p\frac{\Gamma(n-\alpha\frac{N}{2}(1-\frac{1}{p}))}{h^{\alpha\frac{N}{2}(1-\frac{1}{p})}\Gamma(n)}\leq \frac{C_p}{(nh)^{\alpha\frac{N}{2}(1-\frac{1}{p})}},
$$
where we have applied the asymptotic behaviour of the Gamma function (see \cite{ET}). Since the function $\mathcal{G}^{\alpha}_{n,h}$ belongs to $L^p(\R^N)$ for all $n\in\N,$ then the result is valid for all $n\in\N.$
\end{proof}

Next, let us present a result about the  $L^p-L^q$ asymptotic decay for $u.$\\
\begin{theorem}\label{teorema4.1} Let $1\leq q\leq p\leq \infty.$ If $f\in L^q(\R^N)$, then the solution $u$ of \eqref{Main} satisfies

\begin{itemize}
\item[$(i)$]  If $q=\infty$ then $
\| u(nh) \|_{\infty} \leq\| f\|_{\infty}.
$
\item[$(ii)$]  If $1\leq q<\infty$ and $N>2q$, then for each $p\in[q,\frac{Nq}{N-2q})$ \begin{equation}\label{eqdecay}
\| u(nh) \|_{p} \leq C_p\dfrac{1}{(nh)^{\frac{\alpha N}{2}(1/q-1/p)}}\| f\|_{q}.
\end{equation}
\item[$(iii)$]  If $1\leq q<\infty$ and $N=2q$, then for each $p\in[q,\infty)$ the estimate \eqref{eqdecay} holds.
\item[$(iv)$]  If $1\leq q<\infty$ and $N<2q$, then for each $p\in[q,\infty]$ the estimate \eqref{eqdecay} holds.
\end{itemize}
Here, $C_p$ is a constant independent of $h$ and $n$.
\end{theorem}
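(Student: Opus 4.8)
The plan is to reduce everything to the one-parameter decay estimate for the fundamental solution, Lemma~\ref{G-dacay}, combined with Young's convolution inequality. Recall that by Theorem~\ref{Theo-Sol} the solution of \eqref{Main} is $u(nh,\cdot)=\mathcal{G}^{\alpha}_{n,h}\ast f$, and that $\mathcal{G}^{\alpha}_{n,h}\in L^r(\R^N)$ for the range of exponents $r$ indicated in Lemma~\ref{G-dacay}, with
$$
\|\mathcal{G}^{\alpha}_{n,h}\|_{r}\leq C_r\,(nh)^{-\frac{\alpha N}{2}(1-1/r)},\qquad n\in\N .
$$
Young's inequality states that if $1+\frac1p=\frac1q+\frac1r$ then $\|g\ast f\|_p\leq \|g\|_r\|f\|_q$. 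So given $1\le q\le p\le\infty$, I would choose $r$ by $\frac1r=1-\frac1q+\frac1p$ (equivalently $\frac1r=1+\frac1p-\frac1q$), note that $r\in[1,\infty]$ since $q\le p$, and then $1-\frac1r=\frac1q-\frac1p$, giving immediately
$$
\|u(nh)\|_p=\|\mathcal{G}^{\alpha}_{n,h}\ast f\|_p\leq \|\mathcal{G}^{\alpha}_{n,h}\|_r\|f\|_q\leq C_r\,(nh)^{-\frac{\alpha N}{2}(1/q-1/p)}\|f\|_q,
$$
which is exactly \eqref{eqdecay}. So the whole content is a bookkeeping of exactly when the chosen $r$ lies in the admissible range of Lemma~\ref{G-dacay}; that is where the four cases come from.

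Concretely, I would handle the cases as follows. Case $(i)$, $q=\infty$: then necessarily $p=\infty$, forcing $r=1$, and $\|\mathcal{G}^{\alpha}_{n,h}\|_1=1$ by Proposition~\ref{HeatProperties}$(ii)$ (or \eqref{Eq5.4}); hence $\|u(nh)\|_\infty\le\|f\|_\infty$, with no decay, matching the statement (this is also the bound already noted right after \eqref{Eq5.4} in the proof of Theorem~\ref{Theo-Sol}). Case $(iv)$, $N<2q$: here $\frac{N}{N-2}$ is either negative or $>q'$-type large, and one checks that $r=\bigl(1+\tfrac1p-\tfrac1q\bigr)^{-1}$ satisfies $r<\frac{N}{N-2}$ automatically for every $p\in[q,\infty]$ (when $N\le 2$, $\frac{N}{N-2}$ is interpreted as $+\infty$ and the condition is vacuous; when $N>2$, the inequality $r<\frac{N}{N-2}$ reduces after a short computation to $N<2q$, which holds). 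Case $(iii)$, $N=2q$: then $\frac{N}{N-2}=q'$ and the constraint $r<\frac{N}{N-2}$ becomes $r<q'=\frac{q}{q-1}$; unwinding $\frac1r=1+\frac1p-\frac1q$ this is equivalent to $\frac1p>0$, i.e.\ $p<\infty$, which is precisely the stated range $p\in[q,\infty)$. Case $(ii)$, $N>2q$: the admissibility constraint is again $r<\frac{N}{N-2}$, and substituting $\frac1r=1+\frac1p-\frac1q$ and solving for $p$ gives exactly $p<\frac{Nq}{N-2q}$, i.e.\ $p\in[q,\frac{Nq}{N-2q})$; at the lower endpoint $p=q$ one has $r=1$ and the estimate is the trivial $\|u(nh)\|_q\le\|f\|_q$.

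The only mild subtlety — and the step I would write most carefully — is the endpoint/degenerate bookkeeping: verifying that the exponent $r$ produced by Young's inequality is genuinely in the half-open interval allowed by Lemma~\ref{G-dacay}, handling the conventions $N=1,2$ where $\frac{N}{N-2}$ must be read as $\infty$, and checking the boundary cases $p=q$ (where $r=1$, Young degenerates to $\|\mathcal{G}^{\alpha}_{n,h}\ast f\|_q\le\|\mathcal{G}^{\alpha}_{n,h}\|_1\|f\|_q=\|f\|_q$, consistent with $(nh)^{0}$) and $p=\infty$ with $q<\infty$ (where $r=q'$). Since $C_r$ in Lemma~\ref{G-dacay} depends only on $r$, and $r$ depends only on $p,q$, the resulting constant $C_p$ is independent of $h$ and $n$, as claimed. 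No new analytic idea beyond Lemma~\ref{G-dacay} and Young's inequality is needed; the proof is essentially a translation of the classical $L^q$–$L^p$ smoothing estimates for the heat semigroup to this discrete-in-time fractional setting, with the decay rate rescaled by the factor $\alpha$.
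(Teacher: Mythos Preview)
Your proposal is correct and follows essentially the same approach as the paper: apply Young's inequality with $1+\tfrac1p=\tfrac1q+\tfrac1r$ to bound $\|u(nh)\|_p\le\|\mathcal{G}^{\alpha}_{n,h}\|_r\|f\|_q$, then invoke Lemma~\ref{G-dacay} and check in each case that the resulting $r$ lies in the admissible range. The only difference is that you spell out the endpoint bookkeeping (the translation between the constraints on $p$ and on $r$, and the conventions for $N\le 2$) more explicitly than the paper does.
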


\begin{proof}
Take $r\geq 1$ such that $1+1/p=1/q+1/r,$ and applying Young's inequality we get $$\| u(nh) \|_{p} =\|  \mathcal{G}_{n,h}^{\alpha}*f\|_{p} \leq \|  \mathcal{G}_{n,h}^{\alpha}\|_{r}\| f\|_{q}.$$ Now, we apply Lemma \ref{G-dacay} to estimate $\|  \mathcal{G}_{n,h}^{\alpha}\|_{r}.$ For the case $(i)$, if $q=\infty,$ then $p=\infty,r=1,$ and therefore since $\|\mathcal{G}_{n,h}^{\alpha}\|_1=1,$ the result follows. Note that in the case $(ii)$, if $1\leq q<\infty$ and $N>2q,$ then the condition $q\leq p<\frac{Nq}{N-2q}$ implies $1\leq r <\frac{N}{N-2}.$ So, by Lemma \ref{G-dacay} we get the desired estimates. The cases $(iii)$ and $(iv)$ follow in a similar way.
\end{proof}

\begin{lemma}\label{Nabla-G-Estimates} Let $0<\alpha<1.$ Then $$
\|\nabla\mathcal{G}_{n,h}^{\alpha} \|_{p} \leq C_p\dfrac{1}{(nh)^{\frac{\alpha N}{2}(1-1/p)+\frac{\alpha}{2}}},\quad n\in\N,
$$
 for $p\in [1,\infty)$ if $N=1,$ and for $p\in[1,\frac{N}{N-1})$ if $N>1.$
\end{lemma}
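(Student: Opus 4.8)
The plan is to mimic the argument used for Lemma \ref{G-dacay}, but now tracking the gradient through the subordination formula \eqref{psi-1-alfha}. First I would differentiate under the integral sign in \eqref{psi-1-alfha}, which is justified by dominated convergence since $G_t$ and all its spatial derivatives are rapidly decaying and the time-integrals converge; this gives
\begin{align*}
\nabla\mathcal{G}^{\alpha}_{n,h}(x)=\frac{1}{h^{n}}\int_{0}^{\infty}\int_{0}^{\infty}e^{-s/h}\frac{s^{n-1}}{(n-1)!}\psi_{\alpha,1-\alpha}(s,t)\,\nabla G_{t}(x)\,ds\,dt.
\end{align*}
Taking $L^p$-norms in $x$ and applying Minkowski's integral inequality then yields
\begin{align*}
\|\nabla\mathcal{G}^{\alpha}_{n,h}\|_{p}\leq\frac{1}{h^{n}}\int_{0}^{\infty}\int_{0}^{\infty}e^{-s/h}\frac{s^{n-1}}{(n-1)!}\psi_{\alpha,1-\alpha}(s,t)\,\|\nabla G_{t}\|_{p}\,ds\,dt.
\end{align*}

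Next I would use the well-known scaling bound $\|\nabla G_{t}\|_{p}\leq C_{p}\,t^{-\frac{N}{2}(1-1/p)-\frac12}$, which comes from the explicit form \eqref{gaussian-semigroup} of $G_t$ by the change of variables $x=\sqrt{t}\,y$ (the extra $t^{-1/2}$ arising from the chain rule on the exponential). Substituting this into the double integral, I first perform the $t$-integration against $\psi_{\alpha,1-\alpha}(s,\cdot)$; by \cite[Theorem 3 (vi)]{Abadia-Miana} (the same moment identity for the scaled Wright function already invoked in the proof of Lemma \ref{G-dacay}) the $t$-integral of $\psi_{\alpha,1-\alpha}(s,t)\,t^{-\frac{N}{2}(1-1/p)-\frac12}$ equals a constant times $s^{-\alpha\left(\frac{N}{2}(1-1/p)+\frac12\right)}$, provided the order $\frac{N}{2}(1-1/p)+\frac12$ lies in the admissible range of that identity — this is exactly where the restriction $p\in[1,\tfrac{N}{N-1})$ for $N>1$ (and $p\in[1,\infty)$ for $N=1$) enters, since it guarantees $\frac{N}{2}(1-1/p)+\frac12<1$. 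Then the remaining $s$-integral is $\frac{1}{h^{n}\Gamma(n)}\int_{0}^{\infty}e^{-s/h}s^{n-1-\alpha\left(\frac{N}{2}(1-1/p)+\frac12\right)}\,ds=C_{p}\,h^{-\alpha\left(\frac{N}{2}(1-1/p)+\frac12\right)}\,\Gamma\!\left(n-\alpha\left(\tfrac{N}{2}(1-1/p)+\tfrac12\right)\right)/\Gamma(n)$.

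Finally, applying the asymptotic behaviour of the quotient of Gamma functions (see \cite{ET}), namely $\Gamma(n-\gamma)/\Gamma(n)\sim n^{-\gamma}$, with $\gamma=\alpha\left(\frac{N}{2}(1-1/p)+\frac12\right)$, gives the bound $C_{p}\,(nh)^{-\alpha\left(\frac{N}{2}(1-1/p)+\frac12\right)}$ for $n$ large; and since $\nabla\mathcal{G}^{\alpha}_{n,h}\in L^p(\R^N)$ for every $n\in\N$ (each $\nabla\mathcal{G}_{j,h}$ is integrable and the series \eqref{subordination1} can be differentiated term-by-term), one absorbs the finitely many small $n$ into the constant to conclude the estimate for all $n\in\N$. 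I expect the main technical obstacle to be the precise verification of the admissibility condition for the Wright-function moment identity and the justification of interchanging differentiation, summation, and integration; the Gamma-asymptotics step is routine once the order parameter is pinned down.
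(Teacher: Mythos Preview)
Your proposal is correct and follows essentially the same approach as the paper: the paper's proof is a one-line remark that the argument is identical to that of Lemma \ref{G-dacay} upon replacing $\|G_t\|_p$ by $\|\nabla G_t\|_p = C_p\,t^{-\frac{N}{2}(1-1/p)-1/2}$, and you have simply written out those details (Minkowski, the Wright-function moment identity, the Gamma asymptotics, and the absorption of small $n$). The range restriction on $p$ you identified is exactly the one needed for the moment identity to apply.
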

\begin{proof}
The proof is similar to the proof of Lemma \ref{G-dacay} by use of  $\| \nabla G_t \|_{p} =  C_p\frac{1}{t^{\frac{N}{2}(1-\frac{1}{p})+1/2}}$ (see \cite[p.334 (3.326)]{G}).
\end{proof}

\subsection{Large-time behaviour of solutions}\label{5}

In this part we study the asymptotic behaviour of solution $u$ of problem given by \eqref{Main}.
Set
$$M:=\int_{\R^N} f(x)\,dx.$$

Before to show the main result of this section, we  need the following decomposition lemma (see \cite{DZ}).\\
\begin{lemma}\label{decolemma}
Suppose $f\in L^1(\mathbb{R}^N)$ such that $\int_{\mathbb{R}^N}|x||f(x)|dx<\infty.$  Then there exists $F\in L^1(\mathbb{R}^N;\mathbb{R}^N)$ such that
\begin{equation*}
f=\left(\int_{\mathbb{R}^N}f(x)dx\right)\delta_0+\mbox{div}\, F
\end{equation*}
in the distributional sense and
\begin{equation*}
\|F\|_{L^1(\mathbb{R}^N;\mathbb{R}^N)}\leq C_d\int_{\mathbb{R}^N}|x||f(x)|dx.
\end{equation*}
\end{lemma}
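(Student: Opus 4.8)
The plan is to construct $F$ explicitly, by transporting the mass of $f$ radially to the origin; this is a classical device going back to Duoandikoetxea and Zuazua \cite{DZ}. Writing $x=\rho\omega$ with $\rho=|x|$ and $\omega=x/|x|\in S^{N-1}$, I would set
\[
F(x):=-\,\frac{x}{|x|^{N}}\int_{|x|}^{\infty}s^{N-1}f\!\left(s\,\frac{x}{|x|}\right)ds=\phi(\rho,\omega)\,\omega,\qquad \phi(\rho,\omega):=-\,\frac{1}{\rho^{N-1}}\int_{\rho}^{\infty}s^{N-1}f(s\omega)\,ds .
\]
The first step is to check that $F$ is well defined a.e.: since $\int_{S^{N-1}}\int_{0}^{\infty}s^{N-1}|f(s\omega)|\,ds\,d\omega=\|f\|_{L^{1}(\mathbb{R}^{N})}<\infty$, for a.e.\ $\omega$ the radial integral $\int_{0}^{\infty}s^{N-1}|f(s\omega)|\,ds$ is finite, hence so is the tail from every $\rho>0$; in particular $F$ is $\mathbb{R}^{N}$-valued and measurable.

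The second step is the distributional identity $f=\big(\int_{\mathbb{R}^{N}}f\big)\delta_{0}+\operatorname{div}F$. Since $F$ is purely radial, $F\cdot\nabla\varphi=\phi(\rho,\omega)\,\partial_{\rho}\varphi$ for every $\varphi\in C_{c}^{\infty}(\mathbb{R}^{N})$; expressing $\int_{\mathbb{R}^{N}}F\cdot\nabla\varphi$ in polar coordinates and integrating by parts once in $\rho$ (using $\partial_{\rho}\big(\rho^{N-1}\phi(\rho,\omega)\big)=\rho^{N-1}f(\rho\omega)$ and $\rho^{N-1}\phi(\rho,\omega)\to -\int_{0}^{\infty}s^{N-1}f(s\omega)\,ds$ as $\rho\to0^{+}$) produces exactly an interior term $-\int_{\mathbb{R}^{N}}f\varphi$ and a boundary term at the origin equal to $\varphi(0)\int_{S^{N-1}}\int_{0}^{\infty}s^{N-1}f(s\omega)\,ds\,d\omega=\varphi(0)\int_{\mathbb{R}^{N}}f$. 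Hence $\langle\operatorname{div}F,\varphi\rangle=-\int_{\mathbb{R}^{N}}F\cdot\nabla\varphi=\int_{\mathbb{R}^{N}}f\varphi-\varphi(0)\int_{\mathbb{R}^{N}}f$, which is the assertion. Equivalently, one integrates by parts on $\{|x|>\varepsilon\}$ and lets $\varepsilon\to0$, the Dirac mass being the limit of the flux of $F$ through the shrinking sphere $\{|x|=\varepsilon\}$. The passages to the limit are justified by dominated convergence, dominating the boundary integrand by $\|\varphi\|_{\infty}\int_{0}^{\infty}s^{N-1}|f(s\omega)|\,ds$, which is integrable over $S^{N-1}$.

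The third step is the norm bound. From $|F(\rho\omega)|\le\rho^{-(N-1)}\int_{\rho}^{\infty}s^{N-1}|f(s\omega)|\,ds$, polar coordinates, and Tonelli's theorem to exchange the $\rho$- and $s$-integrations,
\[
\|F\|_{L^{1}(\mathbb{R}^{N};\mathbb{R}^{N})}\le\int_{S^{N-1}}\int_{0}^{\infty}\!\!\int_{\rho}^{\infty}s^{N-1}|f(s\omega)|\,ds\,d\rho\,d\omega=\int_{S^{N-1}}\int_{0}^{\infty}s\cdot s^{N-1}|f(s\omega)|\,ds\,d\omega=\int_{\mathbb{R}^{N}}|x|\,|f(x)|\,dx,
\]
so the estimate holds with $C_{d}=1$, and in particular $F\in L^{1}(\mathbb{R}^{N};\mathbb{R}^{N})$.

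All the computations here are short, so I expect the main difficulty to be conceptual rather than technical: guessing the radial-transport formula for $F$, and then carefully carrying out the limit at the origin (equivalently the $\varepsilon\to0$ flux computation), which is precisely what pins down the coefficient $\int_{\mathbb{R}^{N}}f$ in front of $\delta_{0}$ and makes the necessary compatibility condition $\int_{\mathbb{R}^{N}}(f-M\delta_{0})=0$ hold automatically.
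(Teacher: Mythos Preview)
Your argument is correct and is in fact the classical construction from Duoandikoetxea--Zuazua \cite{DZ}, which the paper merely cites without reproducing any proof. So there is nothing to compare: the paper treats Lemma~\ref{decolemma} as a black box, and you have supplied exactly the proof the citation points to, with the sharp constant $C_d=1$.
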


\begin{theorem}\label{Theorem5.1}
Let $1\leq p\leq \infty$ and $u$ be the solution of \eqref{Main}.
\begin{itemize}
\item[$(i)$] Then $$(nh)^{\frac{\alpha N}{2}\left(1-\frac{1}{p}\right)}\|u(nh)-M\mathcal{G}^{\alpha}_{n,h}\|_{p}\to 0,\quad \mbox{as}\quad n\to\infty,$$ for $p\in [1,\infty)$ if $N=1,$ and for $p\in[1,\frac{N}{N-1})$ if $N>1,$
\item[$(ii)$] Suppose in addition that $|x|f\in L^1(\R),$ then $$ (nh)^{\frac{\alpha N}{2}(1-\frac{1}{p})}\|u(nh)-M\mathcal{G}^{\alpha}_{n,h}\|_{p}\lesssim (nh)^{-\alpha/2},$$ for $p\in [1,\infty)$ if $N=1,$ and for $p\in[1,\frac{N}{N-1})$ if $N>1.$
\end{itemize}
\end{theorem}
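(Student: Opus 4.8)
The strategy is to reduce everything to the known large-time behaviour of the classical discrete heat semigroup $(\mathcal{G}_{j,h}\ast f)(x)$ from \cite{Abadia-Alvarez-20}, transported through the subordination formula \eqref{subordination1}, together with the decay estimates in Lemma \ref{G-dacay} and Lemma \ref{Nabla-G-Estimates}. For part $(i)$, I would first write, using \eqref{DensityInJ},
\begin{align*}
u(nh,x)-M\mathcal{G}^{\alpha}_{n,h}(x)=\sum_{j=1}^{\infty}\varphi^{h}_{\alpha,1-\alpha}(n-1,j-1)\bigl[(\mathcal{G}_{j,h}\ast f)(x)-M\,\mathcal{G}_{j,h}(x)\bigr],
\end{align*}
so that by the positivity of $\varphi^{h}_{\alpha,1-\alpha}$ (Proposition \ref{Prop-Levy}$(vii)$, valid for $0<h\le 1$; the general $h>0$ case handled by iterating or by a scaling argument) and $\sum_j\varphi^{h}_{\alpha,1-\alpha}(n-1,j-1)=1$ one gets
\begin{align*}
\|u(nh)-M\mathcal{G}^{\alpha}_{n,h}\|_{p}\le\sum_{j=1}^{\infty}\varphi^{h}_{\alpha,1-\alpha}(n-1,j-1)\,\bigl\|(\mathcal{G}_{j,h}\ast f)-M\,\mathcal{G}_{j,h}\bigr\|_{p}.
\end{align*}
The classical result gives $(jh)^{\frac{N}{2}(1-1/p)}\|(\mathcal{G}_{j,h}\ast f)-M\mathcal{G}_{j,h}\|_p\to 0$ as $j\to\infty$; call this quantity $\varepsilon_j\to 0$, with $\sup_j\varepsilon_j=:K<\infty$.

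Next I would use the scaling/concentration of the weight $\varphi^{h}_{\alpha,1-\alpha}(n-1,\cdot)$ in the index $j$. The key quantitative input is Proposition \ref{Prop-Levy}$(ix)$ (and $(viii)$), which computes moments of $\varphi^{h}_{\alpha,1-\alpha}(n-1,j)$ against Ces\`aro sequences $k^{\gamma}(j)$; in particular the ``mass'' in $j$ is $1$ and the first Ces\`aro moment is $\sum_j\varphi^h_{\alpha,1-\alpha}(n-1,j)k^{1}(j)=h^{\alpha-1}k^{\alpha}(n-1)\sim c\,(nh)^{\alpha-1}h^{-1}$ times $h$, i.e. $\sum_j j\,\varphi^h_{\alpha,1-\alpha}(n-1,j)\asymp (nh)^{\alpha}/h$. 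Hence the weight lives, in $j$, on a scale $j\asymp (nh)^{\alpha}/h$, which is exactly the scale that turns $(jh)^{-\frac{N}{2}(1-1/p)}\asymp ((nh)^{\alpha})^{-\frac N2(1-1/p)}=(nh)^{-\frac{\alpha N}{2}(1-1/p)}$ — matching the claimed normalization. To make this rigorous I would split the sum at $j\le\delta (nh)^{\alpha}/h$ and $j>\delta(nh)^{\alpha}/h$: on the small-$j$ part use the trivial bound $\|(\mathcal G_{j,h}\ast f)-M\mathcal G_{j,h}\|_p\le (\|f\|_1+|M|)\|\mathcal G_{j,h}\|_p\le C j^{-\frac N2(1-1/p)}$ and the tail estimate $\sum_{j\le\delta (nh)^\alpha/h}\varphi^h_{\alpha,1-\alpha}(n-1,j-1)j^{-\frac N2(1-1/p)}$, which via Proposition \ref{Prop-Levy}$(ix)$–type identities is $O((nh)^{-\frac{\alpha N}{2}(1-1/p)})$ times a factor that $\to 0$ with $\delta$; on the large-$j$ part bound $\varepsilon_j\le\sup_{j>\delta(nh)^\alpha/h}\varepsilon_j$, which $\to 0$ as $n\to\infty$ for fixed $\delta$, times $\sum_j\varphi^h_{\alpha,1-\alpha}(n-1,j-1)(jh)^{-\frac N2(1-1/p)}=O((nh)^{-\frac{\alpha N}{2}(1-1/p)})$ (again from the moment computations). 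Multiplying through by $(nh)^{\frac{\alpha N}{2}(1-1/p)}$ and letting first $n\to\infty$ then $\delta\to 0$ yields $(i)$.

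For part $(ii)$, under $|x|f\in L^1$ I would invoke the decomposition Lemma \ref{decolemma}, $f=M\delta_0+\operatorname{div}F$ with $\|F\|_{L^1}\le C_d\int|x||f(x)|\,dx$. Then $(\mathcal G_{j,h}\ast f)-M\mathcal G_{j,h}=\operatorname{div}(\mathcal G_{j,h}\ast F)=\nabla\mathcal G_{j,h}\ast F$ (componentwise), so $\|(\mathcal G_{j,h}\ast f)-M\mathcal G_{j,h}\|_p\le\|\nabla\mathcal G_{j,h}\|_p\|F\|_{L^1}$, and summing against the Wright weight,
\begin{align*}
\|u(nh)-M\mathcal{G}^{\alpha}_{n,h}\|_{p}\le\|F\|_{L^1}\sum_{j=1}^{\infty}\varphi^{h}_{\alpha,1-\alpha}(n-1,j-1)\,\|\nabla\mathcal G_{j,h}\|_p.
\end{align*}
Here I would \emph{not} pass through Lemma \ref{Nabla-G-Estimates} directly on $\mathcal G^\alpha_{n,h}$ but instead use $\|\nabla\mathcal G_{j,h}\|_p\le C j^{-\frac N2(1-1/p)-1/2}$ (the classical gradient estimate, same source as in Lemma \ref{Nabla-G-Estimates}), and then evaluate $\sum_j\varphi^h_{\alpha,1-\alpha}(n-1,j-1)(jh)^{-\frac N2(1-1/p)-1/2}$. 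By the moment identities of Proposition \ref{Prop-Levy}$(ix)$ this sum is $\asymp (nh)^{-\frac{\alpha N}{2}(1-1/p)-\alpha/2}$ — the extra half-power in $j$ becoming an extra $\alpha/2$-power in $nh$ because of the $j\asymp (nh)^\alpha/h$ scaling — which after multiplying by $(nh)^{\frac{\alpha N}{2}(1-1/p)}$ gives the claimed $(nh)^{-\alpha/2}$ rate.

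\textbf{Main obstacle.} The delicate point is the precise asymptotics of the ``fractional-power moments'' $\sum_{j\ge 1}\varphi^h_{\alpha,1-\alpha}(n-1,j-1)\,(jh)^{-\theta}$ for real $\theta>0$ (here $\theta=\frac N2(1-1/p)$ and $\theta=\frac N2(1-1/p)+\frac12$), since Proposition \ref{Prop-Levy}$(ix)$ only gives them cleanly for $\theta$ of the form $-\gamma$ with $k^{\gamma}$ a Ces\`aro sequence, i.e. essentially integer-like moments, not negative non-integer powers. I expect to handle this by going back to the integral representation \eqref{psi-1-alfha}: write $\sum_j\varphi^h_{\alpha,1-\alpha}(n-1,j-1)(jh)^{-\theta}$ as (a constant times) $\frac{1}{h^n(n-1)!}\int_0^\infty\!\!\int_0^\infty e^{-s/h}s^{n-1}\psi_{\alpha,1-\alpha}(s,t)\,t^{-\theta}\,dt\,ds$, use $\int_0^\infty\psi_{\alpha,1-\alpha}(s,t)t^{-\theta}\,dt=c_\theta\,s^{-\alpha\theta}$ (a Mellin-transform identity for the scaled Wright function, available in \cite{Abadia-Miana}, exactly as used for $\theta$ an integer in the proof of Lemma \ref{G-dacay}), and then $\frac{1}{h^n\Gamma(n)}\int_0^\infty e^{-s/h}s^{n-1-\alpha\theta}\,ds=\Gamma(n-\alpha\theta)/(h^{\alpha\theta}\Gamma(n))\sim (nh)^{-\alpha\theta}\cdot h^{?}$, i.e. $\asymp n^{-\alpha\theta}h^{-\alpha\theta}=(nh)^{-\alpha\theta}$ by Stirling. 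This is really the same computation as in Lemma \ref{G-dacay} but with non-integer exponent, and it supplies all the moment bounds needed above; the rest is the standard split-and-dominate argument for this type of ``approximation to the total mass'' theorem (cf.\ \cite{DZ,Abadias-Alvarez-18}).
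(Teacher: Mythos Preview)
Your route is workable but substantially more involved than the paper's. The paper reverses the order: it proves $(ii)$ first, applying Lemma~\ref{decolemma} directly to write $u(nh)-M\mathcal{G}^\alpha_{n,h}=\nabla\mathcal{G}^\alpha_{n,h}*F$ and then invoking Lemma~\ref{Nabla-G-Estimates} \emph{on $\mathcal{G}^\alpha_{n,h}$ itself} in one line---there is no need to unroll the subordination, and your ``main obstacle'' computation is precisely the content already packaged into Lemmas~\ref{G-dacay} and~\ref{Nabla-G-Estimates}. For $(i)$ the paper does not touch the classical $\alpha=1$ large-time result at all: it approximates $f$ in $L^1$ by $\eta_k\in C_0^\infty(\R^N)$ with $\int\eta_k=M$, splits $u(nh)-M\mathcal{G}^\alpha_{n,h}=\mathcal{G}^\alpha_{n,h}*(f-\eta_k)+(\mathcal{G}^\alpha_{n,h}*\eta_k-M\mathcal{G}^\alpha_{n,h})$, controls the first piece by Lemma~\ref{G-dacay} and the second by part $(ii)$ (applicable since $|x|\eta_k\in L^1$), and lets $n\to\infty$ then $k\to\infty$. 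This standard Duoandikoetxea--Zuazua density trick bypasses entirely your cut at $j\sim\delta(nh)^\alpha/h$ and the accompanying concentration analysis of $\varphi^h_{\alpha,1-\alpha}(n-1,\cdot)$. Two loose ends remain in your plan: the moment identity you invoke only delivers the \emph{full} sum $\sum_j\varphi^h_{\alpha,1-\alpha}(n-1,j-1)(jh)^{-\theta}$, not the truncated piece over $j\le\delta(nh)^\alpha/h$, so the claim that the small-$j$ part carries an $o_\delta(1)$ factor still requires a separate argument; and pushing the $L^p$ norm inside the subordination sum relies on the positivity in Proposition~\ref{Prop-Levy}$(vii)$, which is stated only for $0<h\le1$ (the paper's integral representation \eqref{psi-1-alfha} sidesteps this restriction).
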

\begin{proof}
First we prove assertion $(ii)$. Since that $f, |x|f\in L^1(\R^N),$ by  decomposition Lemma \ref{decolemma} there exists $\psi\in L^1(\mathbb{R}^N;\mathbb{R}^N)$ such that
\begin{align*}
u(nh,x)&=(\mathcal{G}^{\alpha}_{n,h}\ast (M\delta_0+\mbox{div}\, \psi(\cdot)))(x)\\
&=M_c\mathcal{G}^{\alpha}_{n,h}(x)+(\nabla\mathcal{G}^{\alpha}_{n,h}\ast \psi)(x),
\end{align*}
in the distributional sense, and
$$\|\psi\|_{1}\leq C\||x| f(x)\|_1<\infty.$$
Lemma \ref{Nabla-G-Estimates} implies that
\begin{equation}\label{Part-2-estimates}
\|u(nh)-M\mathcal{G}^{\alpha}_{n,h}\|_{p}\leq C\|\nabla\mathcal{G}^{\alpha}_{n,h}\|_{p}\|xf(x)\|_{1}
\leq C_{p,f}\dfrac{1}{(nh)^{\frac{\alpha N}{2}(1-1/p)+\frac{ \alpha}{2}}},
\end{equation}
Hence the assertion $(ii)$ is proved.

To prove  $(i)$, we choose a sequence $(\eta_j)\subset C_0^{\infty}(\mathbb{R}^N)$ such that  $\int_{\mathbb{R}^N}\eta_j(x)\,dx=M$ for all $j,$ and $\eta_j\to f$ in $L^1(\mathbb{R}^N)$ . For each $j$,  by Lemma  \ref{G-dacay} and \eqref{Part-2-estimates},  we get
\begin{align*}
\|u(nh)-M\mathcal{G}^{\alpha}_{n,h}\|_{p}&\leq \|\mathcal{G}^{\alpha}_{n,h}\ast(f-\eta_j)\|_{p}+\|\mathcal{G}^{\alpha}_{n,h}\ast \eta_j-M_c\mathcal{G}^{\alpha}_{n,h}\|_{p}\\
&\leq \|\mathcal{G}^{\alpha}_{n,h}\|_{p}\|f-\eta_j\|_{1}+\|\mathcal{G}^{\alpha}_{n,h}\ast \eta_j-M_c\mathcal{G}^{\alpha}_{n,h}\|_{p}\\
&\leq C_p\dfrac{1}{(nh)^{\frac{\alpha  N}{2}(1-1/p)}}\|f-\eta_j\|_{1}+C_{p,\eta_j}\dfrac{1}{(nh)^{\frac{\alpha N}{2}(1-1/p)+\frac{\alpha}{2}}}.
\end{align*}
Then
$$\limsup_{n\to\infty}\,(nh)^{\frac{\alpha N}{2}(1-1/p)}
\|u(nh)-M\mathcal{G}^{\alpha}_{n,h}\|_{p}\leq
C_p\|f-\eta_j\|_{1}.$$
The assertion follows by letting $j\to\infty$.
\end{proof}

\section*{References}


\begin{thebibliography}{99}

\bibitem{Abadias-Alvarez-18} L. Abadias and E. Alvarez,  Uniform stability for fractional Cauchy problems and applications, {\it Topol. Methods Nonlinear Anal.,} 52 (2018), no. 2, 707--728.

\bibitem{Abadia-Alvarez-20} L. Abadias and E. Alvarez, Asymptotic behaviour for the discrete in time heat equation, Manuscript available at https://arxiv.org/pdf/2102.11109.pdf.

\bibitem{ADT} L. Abadias, M. De Le\'on and J.L. Torrea, Non-local fractional derivatives. Discrete and continuous, {\it J. Math. Anal. Appl.,}  449 (2017), no. 1,  734--755.

\bibitem{Abadias-Lizama} L. Abadias and C. Lizama, Almost automorphic mild solutions to fractional partial difference-differential equations, {\it Appl. Anal.,} 95 (2016), no. 6, 1347--1369.

\bibitem{Abadia-Miana} L. Abadias and P.  Miana, A subordination principle on Wright functions and regularized resolvent families, {\it J. Funct. Spaces}, (2015), Article ID 158145, 9 pages, 2015. https://doi.org/10.1155/2015/158145.

\bibitem{Aigner-(2006)} M. Aigner, Diskrete Mathematik, 6th ed.,  {\it Friedr. Vieweg \& Sohn}, (2006). 

\bibitem{Alvarez-Diaz-Lizama-2020} E. Alvarez,  S. Diaz  and  C. Lizama, C-semigroups, subordination principle and the L\'evy $\alpha$-stable distribution on discrete time,  {\it Commun. Contemp. Math.}, (2020).

\bibitem{Bazhlthesis} E. G. Bazhlekova, Fractional evolution equations in Banach spaces [Ph.D. thesis],  {\it University Press Facilities, Eindhoven University of
Technology}, (2001).

\bibitem{C} O. Ciaurri, T. A. Gillespie, L. Roncal, J. L. Torrea and J. L. Varona, Harmonic analysis associated with a discrete Laplacian, {\it J. Anal. Math.}, 132 (2017), 109--131.

\bibitem{C2} O. Ciaurri, L. Roncal,  P. R. Stinga, J. L. Torrea and J. L. Varona,  Nonlocal discrete diffusion equations and the fractional discrete Laplacian, regularity and applications, {\it Adv. Math.,} 330 (2018), 688--738.

\bibitem{D} E. B. Davies, Gaussian upper bounds for the heat kernels of some second-order operators on Riemannian manifolds. {\it J. Funct. Anal.,} 80 (1988), no. 1, 16--32.

\bibitem{D2} E. B. Davies, $L^p$ spectral theory of higher-order elliptic differential operators. {\it Bull. London Math. Soc.,} 29 (1997), no. 5, 513--546.

\bibitem{DD} M. Del Pino and J. Dolbeault, Asymptotic behavior of nonlinear diffusions. {\it Math. Res. Lett.,} 10 (2003), no. 4, 551--557.

\bibitem{DZ} J. Duoandikoetxea and J. Zuazua, Moments, masses de Dirac et d\'ecomposition de fonctions. {\it C. R. Acad. Sci. Paris S\'er. I Math.,}  315 (1992), no. 6,  693--698.

\bibitem{EMOTB} A. Erdélyi, W. Magnus, F. Oberhettinger, F. G. Tricomi and H. Bateman, Higher Transcenden-tal Functions, vol. III, McGraw–Hill, New York, (1953).

\bibitem{ET} A. Erd\'elyi and F. G. Tricomi, The aymptotic expansion of a ratio of Gamma functions, { \it Pacific J. Math.,} 1 (1951), 133--142.

\bibitem{ZE} M. Escobedo and E. Zuazua, Large time behavior for convection-diffusion equations in $\R^N$, {\it J. Funct. Anal.,} 100 (1991), no. 1, 119--161.

\bibitem{Evans}  L. C. Evans, Partial Differential Equations, second ed., in: Graduate Studies in Mathematics, vol. 19, AMS Publications, Providence, Rhode Island, (2014).

\bibitem{Fourier} J. Fourier, Th\'eorie Analytique de la Chaleur, Reprint of the 1822 original, Cambridge Library Collection, Cambridge University Press, Cambridge, (2009).

\bibitem{GV2} A. Gmira and L. Veron, Asymptotic behaviour of the solution of a semilinear parabolic equation, {\it Monatshefte f\"ur Mathematik,} 94 (1982), 299--311.

\bibitem{GV} A. Gmira and L. Veron, Large time behaviour of the solutions of a semilinear parabolic equation in $\R^N$, {\it J. Funct. Anal.,} 53 (1984), 258--276.

\bibitem{GoLi19}  C. Goodrich and C. Lizama,  A transference principle for nonlocal operators using a convolutional approach: Fractional monotonicity and convexity, {\it Isr. J. Math.,} 236 (2020), 533--589 .


\bibitem{Go-Peter} C. Goodrich and A. C. Peterson, Discrete Fractional Calculus, Springer International Publishing, (2015).


\bibitem{G} I. S. Gradshteyn and I. M. Ryzhik,  Table of Integrals, Series, and Products, 6th edition. Academic Press, Inc., San Diego, CA,  (2000).

\bibitem{Gr} A. Grigor'yan, Estimates of heat kernels on Riemannian manifolds, manuscript available at www.ma.ic.ac.uk/˜grigor, (1999).


\bibitem{Kemppainen} J. Kemppainen, J. Siljander and  R. Zacher, Representation of solutions and large-time behavior for fully nonlocal diffusion equations, {\it J. Differential Equations}, 263 (2017), no. 1, 149--201.

\bibitem{H-transforms} A. A. Kilbas and M. Saigo, {$H$}-transforms, Analytical Methods and Special Functions, 9, Theory and applications, (2004).

\bibitem{KS} S. Kusuoka and D. Stroock, Long time estimates for the heat kernel associated with a uniformly subelliptic symmetric second order operator, {\it Ann. of Math.,}  127 (1988), no. 1, 165--189.

\bibitem{L} P. Li, Large time behavior of the heat equation on complete manifolds with nonnegative Ricci curvature, {\it Ann. of Math.,} 124 (1986), no. 1, 1--21.

\bibitem{Lizama-(2015)} C. Lizama, lp-maximal regularity for fractional difference equations on UMD spaces, {\it Math. Nachrichten}, 288 (2015), no. 17/18, 2079--2092.

\bibitem{PAMS} C. Lizama, The Poisson distribution, abstract fractional difference  equations and stability, {\it Proc. Amer. Math. Soc.,} 145 (2017), no. 9, 3809--3827.


\bibitem{LR} C. Lizama and L. Roncal, H\"older-Lebesgue regularity and almost periodicity for semidiscrete equations with a fractional Laplacian, {\it Discrete Contin. Dyn. Syst.,} 38 (2018), no. 3, 1365--1403.


\bibitem{Mainardi} F. Mainardi, Fractional Calculus and Waves in Linear Viscoelasticity: An Introduction to Mathematical Models, Imperial College Press, London, UK, (2010).



\bibitem{Miller} K. S. Miller and
B. Ross, An Introduction to the Fractional Calculus and Fractional Differential Equations, John Wiley \& Sons, New York, NY, USA, (1993).

\bibitem{Mo-Wy} D. Mozyrska and M. Wyrwas, The Z-transform method and delta type fractional difference operators, {\it Discrete Dyn. Nat. Soc.,} (2015), Article ID 852734, 12 pages, 2015. https://doi.org/10.1155/2015/852734.

\bibitem{M} S. Mustapha. Gaussian estimates for heat kernels on Lie groups, {\it Math. Proc. Cambridge Philos. Soc.,} 128 (2000), no. 1, 45--64.

 \bibitem{N} J. R. Norris, Long-time behaviour of heat flow: global estimates and exact asymptotics, {\it Arch. Rational Mech. Anal.,} 140 (1997), no. 2, 161--195.

\bibitem{Ponce-(2020)} R. Ponce, Time discretization of fractional subdiffusion equations via fractional resolvent operators, {\it Comput. Math. Appl.,} 80 (2020), no. 4, 69--92.


\bibitem{Zygmund} A. Zygmund, Trigonometric series, 2nd ed. {V}ols. {I}, {II}, Cambridge University Press, New York, (1959).



\end{thebibliography}
\end{document}